\theoremstyle{plain}
\newtheorem{theorem}{Theorem}[section]
\newtheorem{lemma}[theorem]{Lemma}
\newtheorem{utheorem}{\textrm{\textbf{Theorem}}}
\theoremstyle{definition}
\newtheorem{defn}[theorem]{Definition}
\newtheorem{rem}[theorem]{Remark}
\numberwithin{equation}{section}
\DeclareMathOperator{\adj}{adj}
\DeclareMathOperator{\rk}{rank}
\begin{document}
	\title[Sign regular matrices and variation diminution]{Sign regular matrices and variation diminution: single-vector tests and characterizations, following Schoenberg, Gantmacher--Krein, and Motzkin}
	
	\author{Projesh Nath Choudhury and Shivangi Yadav}
	\address[P.N.~Choudhury]{Department of Mathematics, Indian Institute of Technology Gandhinagar, Gujarat 382355, India}
	\email{\tt projeshnc@iitgn.ac.in}
	\address[S.~Yadav]{Department of Mathematics, Indian Institute of Technology Gandhinagar, Gujarat 382355, India; Ph.D. Student}
	\email{\tt shivangi.yadav@iitgn.ac.in, shivangi97.y@gmail.com}
	
	\date{\today}
	
\begin{abstract}
	Variation diminution (VD) is a fundamental property in total positivity theory, first studied in 1912 by Fekete--P\'olya for one-sided P\'olya frequency sequences, followed by Schoenberg, and by Motzkin who characterized sign regular (SR) matrices using VD and some rank hypotheses. A classical theorem by Gantmacher--Krein characterized the  strictly sign regular (SSR) $m \times n$ matrices for $m>n$ using this property.
	
	In this article we strengthen these results by characterizing all $m \times n$ SSR matrices using VD. We further characterize strict sign regularity of a given sign pattern in terms of VD together with a natural condition motivated by total positivity. We then refine Motzkin's characterization of SR matrices by omitting the rank condition and specifying the sign pattern. This concludes a line of investigation on VD started by Fekete--P\'olya [\textit{Rend.\ Circ.\ Mat.\ Palermo} 1912] and continued by Schoenberg [\textit{Math.\ Z.}\ 1930], Motzkin [PhD thesis, 1936], Gantmacher--Krein [1950 book], Brown--Johnstone--MacGibbon [\textit{J.\ Amer.\ Stat.\ Assoc.}\ 1981], and Choudhury [\textit{Bull.\ London Math.\ Soc.}\ 2022, \textit{Bull.\ Sci.\ Math.}\ 2023].
	
	In fact we show stronger characterizations, by employing single test vectors with alternating sign coordinates -- i.e., lying in the alternating bi-orthant. We also show that test vectors chosen from any other orthant will not work.
\end{abstract}
	
	\subjclass[2020]{15B48 (primary); 15A24 (secondary)}
	
	\keywords{Strict sign regularity, sign regularity, total positivity, variation diminishing property}
	
	\maketitle
	
	\vspace*{-11mm}
	\settocdepth{section}
	
	\section{Introduction and main results} \label{section introduction}
	Given integers $m,n\geq k\geq1$, an $m\times n$ real matrix $A$ is \textit{strictly sign regular of order $k$} (SSR$_k$) if there exists a sequence of signs $\epsilon_r\in\{1,-1\}$ such that every $r\times r$ minor of $A$ has sign $\epsilon_r$ for $1\leq r\leq k$.  A matrix $A$ is \textit{strictly sign regular} (SSR) if $A$ is SSR$_k$ for $k=\mathrm{min}\{m,n\}$. If minors are allowed to also vanish, then $A$ is correspondingly said to be \textit{sign regular of order $k$} (SR$_k$) and \textit{sign regular} (SR). For an SSR (respectively SR) matrix $A$, if $\epsilon_r=1$ for all $r\geq1$, then $A$ is said to be \textit{totally positive} (TP) (respectively \textit{totally non-negative} (TN)). These matrices have applications in numerous subfields: analysis, approximation theory, cluster algebras, combinatorics, differential equations, Gabor analysis, integrable systems, matrix theory, probability and statistics, Lie theory, and representation theory \cite{Ando87,BGKP20,BFZ96,Bre95,fallat-john,FZ00,FZ02,gantmacher-krein,GW96,GRS18,Karlin64,Karlinsplines,Khare20,KW14,Lo55,Lu94,pinkus,Ri03,Schoenberg46,S55,Whitney}.
	
	SR and SSR matrices $A$ enjoy the variation diminishing (VD) property, which states that the number of sign changes of $A\mathbf{x}$ is bounded above by the number of sign changes of $\mathbf{x}$, where  $\mathbf{x}$ is any vector. Variation diminution is considered to have originated from the famous 1883 memoir of Laguerre \cite{Laguerre}, but the phrase ``variation diminishing" (``variationsvermindernd" in German) was coined by P\'{o}lya when Fekete showed (using P\'{o}lya frequency sequences in \cite{FP12}) the following result of Laguerre on the variations/sign changes in the coefficients of power series: \textit{if $f(x)$ is a polynomial and $s\geq0$, then the variations var($e^{sx}f(x)$) in the Maclaurin coefficients of $e^{sx}f(x)$ do not increase in $0\leq s<\infty$ and hence are bounded above by var($f$)$<\infty$}. 
	For more examples and instances of the variation diminishing property in classical analysis, we refer to \cite{PS1925}; there are also applications to the theory of splines -- early papers include \cite{Curry, SW53}. 
	
	We now look back on some fundamental contributions in this direction: In 1930, Schoenberg \cite{S30} initiated the study of the variation diminishing property of a matrix. He showed that SR matrices satisfy this property. In his 1936 Ph.D. thesis \cite{Mot36}, Motzkin then characterized all $m \times n$ matrices satisfying the VD property as well as  certain rank-constraints for technical reasons (e.g. $\rk(A)=n$), by showing that they are precisely SR matrices. Thereafter, in 1950, Gantmacher--Krein  \cite{GK50} characterized $m\times n$ SSR matrices for $m >n$ 
	using the variation diminishing property. One of the main theorems in this article extends their result 
	by removing the size restriction $m>n$.
	
	The following definitions and notations are essential in order to formulate our results.
	\begin{defn}
		Throughout this paper, $m,n\geq1$ will denote fixed but arbitrary integers.
		\begin{enumerate}
			\item For $\mathbf{x}=(x_1,\ldots,x_n)^T\in\mathbb{R}^n$ we denote by $S^-(\mathbf{x})$ the number of sign changes in the sequence $x_1,\ldots,x_n$ after discarding all zero entries. (In this paper we will interchangeably use the words ``components'' and ``entries'' of a vector/matrix.) We define $S^-(\mathbf{0}):=0$. Next, allocate to each zero entries of $\mathbf{x}$ a value of 1 or $-1$, and denote by $S^+(\mathbf{x})$ the maximum possible number of sign changes in the resulting sequence. We set $S^+(\mathbf{0}):=n$ for $\mathbf{0}\in\mathbb{R}^n.$
			\item A \textit{contiguous submatrix} is one whose rows and columns are indexed by consecutive integers.
			\item Let $\adj(A)$ denote the adjugate matrix of $A$. If $n=1$, we define $\adj (A):=1$ to be the determinant of the empty matrix.
			\item Let $\mathbf{e}^j$ denote the unit vector in $\mathbb{R}^n$ whose $j^{th}$ entry is 1 and the rest are zero.
			\item The \textit{signature} or \textit{sign pattern} of an SSR (SR) matrix $A \in \mathbb{R}^{m \times n}$ is the ordered tuple $\epsilon = (\epsilon_1, \dots, \epsilon_{\min\{m,n\}})$ in a definition above, with $\epsilon_r$ denoting the sign of all $r \times r$ minors of $A$. We set $\epsilon_0 := 1$.
			\item Given integers $m,n\geq k\geq 1$ and a sign pattern $\epsilon=(\epsilon_1,\ldots,\epsilon_k)$, an SSR$_k$ (SR$_k$) matrix $A\in \mathbb{R}^{m\times n}$ is called SSR$_k({\epsilon})$ (SR$_k({\epsilon})$) if the sign pattern of all non-zero minors of $A$ of order at most $k$ is given by $\epsilon$. If $k=\min\{m,n\}$ then we simply write SSR$(\epsilon)$ (SR$(\epsilon)$).
			\item Let $A\begin{pmatrix}
				i_1,\ldots,i_p \\
				j_1,\ldots,j_q
			\end{pmatrix}$ denote the $p\times q$ submatrix of an $m\times n$ matrix $A$ indexed by rows $1\leq i_1<\cdots<i_p\leq m$ and columns $1\leq j_1<\cdots<j_q\leq n$, where $1\leq p\leq m$ and $1\leq q\leq n$.
		\end{enumerate}
	\end{defn}
	We now state our main results. The first characterizes SSR matrices of arbitrary sizes using the variation diminishing property, parallel to Motzkin's 1936 result for SR matrices. 
	\begin{utheorem}\label{A}
		Given $A\in\mathbb{R}^{m\times n}$,  $A$ satisfies the variation diminishing property, $S^+(A\mathbf{x})\leq S^-(\mathbf{x})$ for all $\mathbf{0}\neq\mathbf{x}\in\mathbb{R}^n$ if and only if $A$ is SSR.
	\end{utheorem}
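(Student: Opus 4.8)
\section*{Proof proposal for Theorem A}

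The plan is to prove both implications. The direction ``SSR $\Rightarrow$ VD'' is essentially the classical Gantmacher--Krein computation, so I would keep its exposition short; the substance is the converse, whose proof must also cover the range $m\le n$ (beyond Gantmacher--Krein's $m>n$), where $A$ may have a kernel. Throughout, for an index set $I$ of rows and $J$ of columns I write $\det A\binom{I}{J}$ for the corresponding minor. Two elementary facts about $S^+$ on $\mathbb R^m$ will be used: first, $S^+(\mathbf v)\ge m-\#\{\text{nonzero entries of }\mathbf v\}$ for every $\mathbf v$; and second, its sharpening, that a vector with exactly $Z$ zero entries satisfies $S^+(\mathbf v)=Z+\#\{\text{parity mismatches}\}$, where a pair of consecutive nonzero entries separated by $\ell$ zeros is said to ``mismatch'' when ($\ell$ is even and the two signs differ) or ($\ell$ is odd and the two signs agree). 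I would prove these two facts first; they are short but the bookkeeping around them is, I expect, the main technical nuisance.

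For SSR $\Rightarrow$ VD, fix $\mathbf x\neq\mathbf 0$, set $k:=S^-(\mathbf x)$, and write $\mathbf x=\sum_{t=1}^{k+1}\mathbf x^{(t)}$, where $\mathbf x^{(t)}$ is supported on a consecutive block $I_t$ of columns and all its nonzero entries have sign $\sigma(-1)^t$ for a fixed $\sigma$ (exactly $k+1$ nonempty blocks). If $A\mathbf x=\mathbf 0$ and $m\ge k+1$, then pairing any $k+1$ rows against $A\mathbf x^{(1)},\dots,A\mathbf x^{(k+1)}$ gives a singular $(k+1)\times(k+1)$ matrix whose determinant, expanded multilinearly, equals a fixed sign times a nonempty sum of products of positive numbers with minors of constant sign $\epsilon_{k+1}\neq0$ --- a contradiction; hence $S^-(\mathbf x)\ge m=S^+(A\mathbf x)$ and VD holds for this $\mathbf x$. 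If instead $A\mathbf x\neq\mathbf 0$ and, for contradiction, $S^+(A\mathbf x)\ge k+1$, choose rows $i_0<\dots<i_{k+1}$ on which $A\mathbf x$ strictly alternates after assigning signs to its zeros, with not all selected entries zero (possible since $A\mathbf x\neq\mathbf 0$). The cofactor vector of the $(k+2)\times(k+1)$ matrix $W$ with columns $(A\mathbf x^{(t)})|_{\{i_0,\dots,i_{k+1}\}}$ is a left null vector, so its pairing with $(A\mathbf x)|_{\{i_0,\dots,i_{k+1}\}}=W\mathbf 1$ vanishes; but the multilinear expansion shows the maximal minors of $W$ all share a sign, so that pairing, written against the sign-alternating entries $(A\mathbf x)_{i_s}$, becomes a sum of terms of one sign, not all zero --- a contradiction.

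For the converse, assume $S^+(A\mathbf x)\le S^-(\mathbf x)$ for all $\mathbf x\neq\mathbf 0$. \emph{Step 1: every minor of order $r\le\min\{m,n\}$ is nonzero.} If $\det A\binom{I}{J}=0$ with $|I|=|J|=r$, a nontrivial linear dependence among the columns $J$ of $A$ restricted to the rows $I$ produces $\mathbf 0\neq\mathbf x$ supported on $J$ with $(A\mathbf x)|_I=\mathbf 0$. Then $A\mathbf x$ has at least $r$ zero entries, so $S^+(A\mathbf x)\ge r$ by the first elementary fact, while $S^-(\mathbf x)\le r-1$ because $\mathbf x$ has at most $r$ nonzero entries; this contradicts VD. \emph{Step 2: for each $r$, all $r\times r$ minors share a sign.} \textbf{(2a)} Fix an $r$-set of columns $J$ and an $(r-1)$-set of rows $S$; the one-dimensional kernel of the $(r-1)\times r$ matrix $A\binom{S}{J}$ is spanned by $\hat{\mathbf u}$ supported on $J$, with coordinates the signed $(r-1)\times(r-1)$ minors $(-1)^t\det A\binom{S}{J\setminus j_t}$, all nonzero by Step~1. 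Then $A\hat{\mathbf u}$ vanishes exactly on $S$, its remaining coordinates being, up to sign, the $r\times r$ minors $\det A\binom{S\cup\{i\}}{J}$ (nonzero by Step~1). Hence $S^+(A\hat{\mathbf u})=(r-1)+\#\{\text{mismatches}\}$ by the second elementary fact, while VD together with $\#\{\text{nonzeros of }\hat{\mathbf u}\}\le r$ forces $S^+(A\hat{\mathbf u})\le r-1$; so there are no mismatches, and a short Laplace-expansion computation turns the absence of mismatches into the statement that all the minors $\det A\binom{S\cup\{i\}}{J}$ have one sign. Varying $S$ and using connectivity of the Johnson graph on $r$-subsets of $\{1,\dots,m\}$ (edges joining sets sharing $r-1$ elements), all $r\times r$ minors with column set $J$ share a sign. \textbf{(2b)} Symmetrically, fix an $r$-set of rows $\mathbf i$ (take $\mathbf i$ to be all rows when $r=m$) and an $(r+1)$-set of columns $\mathbf J$ --- unless $r=n$, in which case the column set is unique and nothing is needed. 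The null vector $\hat{\mathbf v}$ of $A\binom{\mathbf i}{\mathbf J}$ is supported on $\mathbf J$ with coordinates the signed $r\times r$ minors $\pm\det A\binom{\mathbf i}{\mathbf J\setminus j}$ (nonzero by Step~1), and $A\hat{\mathbf v}$ vanishes on $\mathbf i$ --- and is the zero vector when $r=m$. Either way $S^+(A\hat{\mathbf v})\ge r$, so VD with $\#\{\text{nonzeros of }\hat{\mathbf v}\}\le r+1$ forces $S^-(\hat{\mathbf v})=r$, i.e.\ the coordinates of $\hat{\mathbf v}$ strictly alternate in sign, which says exactly that the minors $\det A\binom{\mathbf i}{\mathbf J\setminus j}$ over $j\in\mathbf J$ share a sign; Johnson-graph connectivity over column sets then makes all $r\times r$ minors with row set $\mathbf i$ share a sign. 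Combining (2a) and (2b): $\sgn\det A\binom{I}{J}$ depends neither on $I$ alone nor on $J$ alone, hence (all such minors being nonzero) it is constant --- which is precisely SSR.

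The step I expect to be the main obstacle is the combinatorics of $S^+$: proving the exact parity formula above and, with it, correctly tracking the degenerate orders $r\in\{m,n\}$ --- equivalently, controlling the kernel of $A$ when $m\le n$, which is exactly what extends Gantmacher--Krein's $m>n$ theorem. Once the $S^+$-formula and the dictionary ``null vector of an $(r-1)\times r$ (resp.\ $r\times(r+1)$) submatrix $\leftrightarrow$ a family of $r\times r$ minors differing in one row (resp.\ one column)'' are set up cleanly, the remainder reduces to routine Laplace expansions and connectivity of Johnson graphs.
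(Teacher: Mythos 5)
Your proposal is correct and follows the same backbone as the paper's proof --- same test vectors (cofactor/null vectors of $(r-1)\times r$ and $r\times (r+1)$ submatrices), same nonzero-minor Step 1, and the same determinantal/partition argument for SSR $\Rightarrow$ VD --- but your execution of Step~2 is a genuine refinement. The paper proves the converse by explicit induction on the minor size $p$: it uses the inductive hypothesis SSR$_{p-1}$ to know the cofactor test vector alternates in sign, so that $S^-(\mathbf x)=p-1$, and then completes the alternation of $A\mathbf x$ by hand to force a contradiction. You instead observe that alternation of $\hat{\mathbf u}$ need not be assumed: since $A\hat{\mathbf u}$ has exactly $r-1$ zeros (by Step~1 both the $(r-1)\times(r-1)$ and $r\times r$ minors are nonzero), the first elementary fact gives $S^+(A\hat{\mathbf u})\geq r-1$, while VD and the trivial bound $S^-(\hat{\mathbf u})\leq r-1$ pin down $S^+(A\hat{\mathbf u})=S^-(\hat{\mathbf u})=r-1$, i.e.\ the alternation of $\hat{\mathbf u}$ is a \emph{consequence} of VD, not an inductive hypothesis. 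Your exact parity formula for $S^+$ then does the sign bookkeeping that the paper does via ad hoc zero-filling, and the Johnson-graph connectivity you invoke is the paper's implicit ``for any $p+1$ rows (columns)'' step made explicit. So the route is the same, but you eliminate the induction on $p$ --- a small but genuine simplification. One point that deserves a sentence in writing: in the SSR $\Rightarrow$ VD direction you justify ``not all selected $w_{i_j}$ are zero'' by the bare fact $A\mathbf x\neq\mathbf 0$; this does hold, but one should note that a maximal $S^+$-fill-in of $A\mathbf x$ partitions its coordinates into $S^+(A\mathbf x)+1$ sign-blocks, and since $A\mathbf x\neq\mathbf 0$ some block contains a nonzero entry, which one can take as its representative. (The paper argues differently, using SSR$_{r+1}$ of the auxiliary matrix $Y$ to bound the number of zeros among the $w_{i_j}$; either argument suffices.) The two ``elementary facts'' about $S^+$ that you plan to prove are correct, and the Laplace sign-tracking that converts ``no mismatches'' into ``all $\det A\binom{S\cup\{i\}}{J}$ share a sign'' does work out --- the $\pm$ in $(A\hat{\mathbf u})_i=\pm\det A\binom{S\cup\{i\}}{J}$ depends on the number of elements of $S$ below $i$, and this parity exactly cancels against the parity in the mismatch rule --- but since this is the crux of your version you should write it out carefully rather than gesture at it.
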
 
	The above theorem ensures that if a matrix $A$ satisfies the variation diminishing property then it is SSR. However, it does not give any information about the signature/sign pattern of $A$. Our next result aims to fill this gap, and it guarantees that we can obtain an SSR matrix with a sign pattern of desired choice at the cost of imposing a natural condition along with the variation diminishing property, which we have stated in part (2) of Theorem \ref{B}. Further, we have refined part (2) below in part (3) where exactly one vector $\mathbf{x}^{A_k}$ for every contiguous square submatrix $A_k$ of $A$ is sufficient to ensure that $A$ is SSR$(\epsilon)$. Thus, (3) reveals the extent to which (2) can be weakened without losing content.
	\begin{utheorem}\label{B}
		Given $A\in\mathbb{R}^{m\times n}$ and $\epsilon=(\epsilon_1,\ldots,\epsilon_{\mathrm{min}\{m,n\}})$, the following statements are equivalent.
		\begin{itemize}
			\item[(1)] $A$ is SSR$({\epsilon})$.
			\item[(2)] For all $\mathbf{0}\neq\mathbf{x}\in\mathbb{R}^n$, we have $S^+(A\mathbf{x})\leq S^-(\mathbf{x})$. Further, for all $\mathbf{x}\in\mathbb{R}^n$ with $A\mathbf{x}\neq\mathbf{0}$ and $S^+(A\mathbf{x})=S^-(\mathbf{x})=r$, if $0\leq r\leq \mathrm{min}\{m,n\}-1$, then the sign of the first (last) component of $A\mathbf{x}$ (if zero, the unique sign given in determining $S^+(A\mathbf{x})$) agrees with $\epsilon_r\epsilon_{r+1}$ times the sign of the first (last) non-zero component of $\mathbf{x}$. \smallskip
			
			In fact, these are also equivalent to the following assertion with a severely reduced test set:\smallskip
			
			\item[(3)] For every contiguous square submatrix $A_k$ of $A$ of size $k\times k$, where $1\leq k\leq \mathrm{min}\{m,n\}$, and given any fixed vector $\mathbf{0}\neq\mathbf{v}:=(\alpha_1,-\alpha_2,\ldots,(-1)^{k-1}\alpha_k)^T\in\mathbb{R}^k$ with all $\alpha_j\geq0$, we define the vector
			\begin{equation}\label{x^A_k}
				\mathbf{x}^{A_k}:=\adj(A_k)\mathbf{v}.
			\end{equation}
			Then $S^+(A_k\mathbf{x}^{A_k})\leq S^-(\mathbf{x}^{A_k})$. Moreover, 
			if $S^+(A_k\mathbf{x}^{A_k})=S^-(\mathbf{x}^{A_k})=r$, and if $0\leq r\leq k-1$, then the sign of the first (last) component of $A_k\mathbf{x}^{A_k}$ (if zero, the unique sign given in determining $S^+(A_k\mathbf{x}^{A_k})$) agrees with $\epsilon_{r}\epsilon_{r+1}$ times the sign of the first (last) non-zero component of $\mathbf{x}^{A_k}$.
		\end{itemize}
	\end{utheorem}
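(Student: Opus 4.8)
The plan is to establish the equivalence through the four implications $(1)\Rightarrow(2)$, $(1)\Rightarrow(3)$, $(2)\Rightarrow(1)$ and $(3)\Rightarrow(1)$, leaning on three tools: Theorem~\ref{A} (``variation diminution $\Leftrightarrow$ SSR'', with no information on the signature), the adjugate identity $A_k\,\adj(A_k)=\det(A_k)\,I_k$, and a Fekete-type fact, that a matrix all of whose minors drawn from sets of \emph{consecutive} rows and \emph{consecutive} columns are non-zero with a fixed sign pattern is automatically SSR with that signature (this propagates from the contiguous case to arbitrary index sets via Sylvester's determinant identity and an induction on the ``spread'' of the row/column sets). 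The computation underpinning everything is that $A_k\mathbf{x}^{A_k}=A_k\,\adj(A_k)\mathbf{v}=\det(A_k)\mathbf{v}$, while the $i$-th entry of $\mathbf{x}^{A_k}=\adj(A_k)\mathbf{v}$ equals $(-1)^{i-1}\sum_{j=1}^{k}\alpha_j M_{ji}$, where $M_{ji}$ is the minor of $A_k$ obtained by deleting row $j$ and column $i$ --- itself a $(k-1)\times(k-1)$ minor of $A$. One also checks directly that $S^+(\mathbf{v})=k-1$ for every admissible $\mathbf{v}\neq\mathbf{0}$.

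For $(1)\Rightarrow(2)$: the inequality $S^+(A\mathbf{x})\le S^-(\mathbf{x})$ is Theorem~\ref{A}. For the sign refinement, assume $A\mathbf{x}\neq\mathbf{0}$ and $S^+(A\mathbf{x})=S^-(\mathbf{x})=r\le\min\{m,n\}-1$. Partition the non-zero coordinates of $\mathbf{x}$ into its $r+1$ maximal blocks of constant sign and replace each block by the sign-weighted sum of the corresponding columns of $A$. Multilinearity of the determinant shows that each $p\times p$ minor ($p\le r+1$) of the resulting $m\times(r+1)$ matrix $W$ is a non-negative combination, with at least one positive coefficient, of $p\times p$ minors of $A$; hence it is non-zero of sign $\epsilon_p$, so $W$ is SSR with signature $(\epsilon_1,\dots,\epsilon_{r+1})$, has $m\ge r+1$, and $A\mathbf{x}=W\mathbf{s}$ with $\mathbf{s}$ strictly alternating. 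This reduces the claim to the refined variation diminution statement for an SSR matrix with at least as many rows as columns fed a strictly alternating vector --- (the extension of) the Gantmacher--Krein situation --- which pins down the sign of the first and last coordinates of $A\mathbf{x}$. For $(1)\Rightarrow(3)$: a contiguous $k\times k$ submatrix $A_k$ of an SSR$(\epsilon)$ matrix has all its $p\times p$ minors among those of $A$, hence is SSR with signature $(\epsilon_1,\dots,\epsilon_k)$; in particular all $M_{ji}$ have sign $\epsilon_{k-1}$, so by the formula above $\mathbf{x}^{A_k}$ has the strictly alternating pattern $\epsilon_{k-1}(1,-1,\dots,(-1)^{k-1})$ and $S^-(\mathbf{x}^{A_k})=k-1$, while $A_k\mathbf{x}^{A_k}=\det(A_k)\mathbf{v}$ (with $\det(A_k)\neq0$ since $A$ is SSR) has $S^+=k-1$. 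Thus $r=k-1$, and applying $(1)\Rightarrow(2)$ to the matrix $A_k$ yields, for every admissible $\mathbf{v}$, exactly the inequality and sign condition asserted in (3).

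For $(2)\Rightarrow(1)$: by its first sentence and Theorem~\ref{A}, $A$ is SSR with some signature $\epsilon'$. Fix $r$ with $0\le r\le\min\{m,n\}-1$, let $A_{r+1}$ be the top-left contiguous $(r+1)\times(r+1)$ block, take $\mathbf{v}=(1,-1,\dots,(-1)^{r})$, and let $\mathbf{x}\in\mathbb{R}^n$ be the zero-padding of $\mathbf{x}^{A_{r+1}}=\adj(A_{r+1})\mathbf{v}$. Then $S^-(\mathbf{x})=r$, and since the restriction of $A\mathbf{x}$ to the first $r+1$ rows equals $\det(A_{r+1})\mathbf{v}$ (which has $S^-=r$ and is non-zero), the inequality in (2) forces $A\mathbf{x}\neq\mathbf{0}$ and $S^+(A\mathbf{x})=r$. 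The sign clause of (2) then says $\sgn((A\mathbf{x})_1)$ equals $\epsilon_r\epsilon_{r+1}$ times the sign of the first non-zero coordinate of $\mathbf{x}$; but $(A\mathbf{x})_1=\det(A_{r+1})$ has sign $\epsilon'_{r+1}$ and the first coordinate of $\mathbf{x}^{A_{r+1}}$ has sign $\epsilon'_r$ (since $A$ is SSR$(\epsilon')$), whence $\epsilon_r\epsilon_{r+1}=\epsilon'_r\epsilon'_{r+1}$; as $\epsilon_0=\epsilon'_0=1$, induction on $r$ gives $\epsilon=\epsilon'$. For $(3)\Rightarrow(1)$ we induct on $k$ the assertion ``all minors of $A$ of order $r\le k$ are non-zero of sign $\epsilon_r$''. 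For $k=1$: if an entry $a_{ij}$ vanished then with $\mathbf{v}=(1)$ we would get $S^+(A_1\mathbf{x}^{A_1})=S^+(\mathbf{0})=1>0=S^-(\mathbf{x}^{A_1})$, contradicting (3); the sign clause at $r=0$ then gives $\sgn(a_{ij})=\epsilon_0\epsilon_1=\epsilon_1$. For the step, take $\mathbf{v}=(1,-1,\dots,(-1)^{k-1})$; by the inductive hypothesis all $M_{ji}$ have sign $\epsilon_{k-1}$, so $\mathbf{x}^{A_k}$ is strictly alternating and $S^-(\mathbf{x}^{A_k})=k-1$; if $\det(A_k)=0$ then $A_k\mathbf{x}^{A_k}=\mathbf{0}$ and $S^+=k>k-1$, again contradicting (3), so $\det(A_k)\neq0$, and then $A_k\mathbf{x}^{A_k}=\det(A_k)\mathbf{v}$ has $S^+=k-1=S^-(\mathbf{x}^{A_k})$ and the sign clause gives $\sgn\det(A_k)=\epsilon_{k-1}\epsilon_k\cdot\epsilon_{k-1}=\epsilon_k$. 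Thus every contiguous $k\times k$ minor has sign $\epsilon_k$, and the Fekete-type fact (using the inductive hypothesis at order $k-1$) upgrades this to all $k\times k$ minors; hence $A$ is SSR$(\epsilon)$.

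The main obstacle is the signature bookkeeping in the two ``equality'' situations: in $(1)\Rightarrow(2)$, the refined variation diminution for an SSR matrix with at least as many rows as columns and a strictly alternating input --- the step that genuinely generalizes Gantmacher--Krein and which is also at the heart of Theorem~\ref{A} --- and, in $(3)\Rightarrow(1)$, the care needed for the fill-in signs of any vanishing coordinates of $A_k\mathbf{x}^{A_k}$ (avoidable in that direction by choosing $\mathbf{v}$ with all $\alpha_j>0$) together with the promotion of contiguous minors to arbitrary ones. Everything else is routine once the identity $A_k\mathbf{x}^{A_k}=\det(A_k)\mathbf{v}$ and the explicit description of $\mathbf{x}^{A_k}$ are in hand.
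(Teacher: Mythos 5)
Your proposal is correct and, at the level of the underlying ideas, matches the paper's: both rely on Theorem~\ref{A}, the identity $A_k\,\adj(A_k)=(\det A_k)I_k$, the explicit formula for the entries of $\mathbf{x}^{A_k}$, Karlin's contiguous-minor criterion (Theorem~\ref{Karlin}), and the reduction of $(1)\Rightarrow(2)$ to an $m\times(r+1)$ SSR matrix fed a strictly alternating vector, settled by Cramer's rule. The differences are organizational but worth noting. You prove $(1)\Rightarrow(3)$ directly, observing that SSR forces $S^+(A_k\mathbf{x}^{A_k})=S^-(\mathbf{x}^{A_k})=k-1$ and then invoking $(1)\Rightarrow(2)$ applied to $A_k$; the paper instead shows $(2)\Rightarrow(3)$, and in doing so proves the stronger statement that the submatrix need not be contiguous and the test vector need not be of the adjugate form, which is used again later in the proof of Theorem~\ref{VD_SRE}. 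For $(2)\Rightarrow(1)$ you first deduce SSR$(\epsilon')$ for an unknown signature via Theorem~\ref{A} and then pin $\epsilon'$ down by testing the top-left $(r+1)\times(r+1)$ block against $\adj(A_{r+1})\mathbf{d}_{r+1}$; the paper instead works directly with an arbitrary $p\times p$ submatrix and a vector $\mathbf{z}$ solving $A_p\mathbf{z}=\mathbf{d}_p$ (which is your $\mathbf{x}^{A_p}$ rescaled), proving non-vanishing and sign in one induction. Either route is fine, and yours is a bit more economical since it reuses Theorem~\ref{A} instead of re-deriving the non-vanishing of minors.

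One small correction to make in $(3)\Rightarrow(1)$: assertion~(3) is stated for a \emph{given fixed} $\mathbf{v}$ with $\alpha_j\ge 0$, not all zero, so you cannot simply set $\mathbf{v}=(1,-1,\dots,(-1)^{k-1})^T$; you must run the argument for whatever $\mathbf{v}$ was supplied. Fortunately your own formula $x^{A_k}_i=(-1)^{i-1}\sum_j\alpha_j M_{ji}$ shows $\mathbf{x}^{A_k}$ is strictly alternating of sign $\epsilon_{k-1}$ for any such $\mathbf{v}$ once the inductive hypothesis gives $\sgn M_{ji}=\epsilon_{k-1}$, and $S^+((\det A_k)\mathbf{v})=k-1$ holds for any admissible $\mathbf{v}$ because the zero coordinates of $\mathbf{v}$ can be $S^+$-completed to continue the alternation. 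Note also that when some $\alpha_1=0$ the first \emph{coordinate} of $A_k\mathbf{x}^{A_k}$ is zero, and one must read off its sign from the unique $S^+$-completion, which is $(-1)^{j-1}\sgn(\det A_k)$ in position $j$; your sign computation then goes through verbatim. So the fix is purely cosmetic.
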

	\begin{rem}
		As a consequence of Theorem \ref{B}, one can reduce the test set in Theorem \ref{A} as well, to a single vector $	\mathbf{x}^{A_k}:=\adj(A_k)\mathbf{v}$ for each contiguous square submatrix, and where $\mathbf{v}$ is as in Theorem \ref{B}, with alternate signed coordinates, i.e.\- from the alternating bi-orthant. As we show in Theorem \ref{SSR_test_vector_not_biorthant}, single test vectors from other bi-orthant do not work.
	\end{rem}
    Our final result is a ``non-strict" variant of Theorem \ref{B} for SR matrices, which in particular strengthens Motzkin's result for each fixed sign pattern $\epsilon$ by removing Motzkin's rank-hypothesis. In fact we show a twofold strengthening of Motzkin's result, by moreover working with a single test vector for each square submatrix.

	\begin{utheorem}\label{VD_SRE}
	Given $A\in\mathbb{R}^{m\times n}$ and $\epsilon=(\epsilon_1,\ldots,\epsilon_{\mathrm{min}\{m,n\}})$, the following are equivalent. 
	\begin{itemize}
		\item[(1)] $A$ is SR$({\epsilon})$. 
		\item[(2)] For all $\mathbf{x}\in\mathbb{R}^n$, we have $S^-(A\mathbf{x})\leq S^-(\mathbf{x})$. Moreover, for all $\mathbf{x}\in\mathbb{R}^n$ with $A\mathbf{x}\neq\mathbf{0}$ and $S^-(A\mathbf{x})=S^-(\mathbf{x})=r$, if $0\leq r\leq \mathrm{min}\{m,n\}-1$, then the sign of the first (last) non-zero component of $A\mathbf{x}$ agrees with $\epsilon_r\epsilon_{r+1}$ times the sign of the first (last) non-zero component of $\mathbf{x}$. 
		\item[(3)] For every square submatrix $A_k$ of $A$ of size $k\times k$, where $1\leq k\leq\mathrm{min}\{m,n\}$, and given any fixed vector $\boldsymbol{\alpha}:=(\alpha_1,-\alpha_2,\ldots,(-1)^{k-1}\alpha_k)^T\in\mathbb{R}^k$ with all $\alpha_j>0$, we define the vector \[\mathbf{y}^{A_k}:=\adj(A_k)\boldsymbol{\alpha}.\] Then $S^-(A_k\mathbf{y}^{A_k})\leq S^-(\mathbf{y}^{A_k})$. Moreover, 
		if $S^-(A_k\mathbf{y}^{A_k})=S^-(\mathbf{y}^{A_k})=r$, and if $0\leq r\leq k-1$ and $A_k\mathbf{y}^{A_k}\neq\mathbf{0}$, then the sign of the first (last) non-zero component of $A_k\mathbf{y}^{A_k}$ agrees with $\epsilon_r\epsilon_{r+1}$ times the sign of the first (last) non-zero component of $\mathbf{y}^{A_k}$. 
	\end{itemize}
\end{utheorem}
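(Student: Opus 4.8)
The plan is to prove the cycle of implications (1) $\Rightarrow$ (2) $\Rightarrow$ (3) $\Rightarrow$ (1). For (1) $\Rightarrow$ (2): the inequality $S^-(A\mathbf{x})\leq S^-(\mathbf{x})$ for sign regular $A$ is Schoenberg's classical variation-diminution theorem (\cite{S30}, see also \cite{GK50}), which needs no rank hypothesis; I would cite it or reprove it by the usual Cauchy--Binet argument. The genuinely new ingredient is the equality clause, and I would obtain it by approximation, using that every SR$(\epsilon)$ matrix of size $m\times n$ is a limit of SSR$(\epsilon)$ matrices of the same size (a classical approximation fact). Suppose $A\mathbf{x}\neq\mathbf{0}$ and $S^-(A\mathbf{x})=S^-(\mathbf{x})=r\leq\min\{m,n\}-1$, and take SSR$(\epsilon)$ matrices $A^{(\ell)}\to A$. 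Since $A^{(\ell)}\mathbf{x}\to A\mathbf{x}$, Theorem \ref{B} gives $S^+(A^{(\ell)}\mathbf{x})\leq S^-(\mathbf{x})=r$ for all $\ell$, while lower semicontinuity of $S^-$ forces $r=S^-(A\mathbf{x})\leq\liminf_\ell S^-(A^{(\ell)}\mathbf{x})\leq\liminf_\ell S^+(A^{(\ell)}\mathbf{x})\leq r$; hence $S^+(A^{(\ell)}\mathbf{x})=S^-(A^{(\ell)}\mathbf{x})=r$ for large $\ell$. A short argument then shows that for large $\ell$ the first component of $A^{(\ell)}\mathbf{x}$ is nonzero with the same sign $\theta$ as the first nonzero component of $A\mathbf{x}$: no sign change of $A^{(\ell)}\mathbf{x}$ can occur among the rows preceding the first nonzero row of $A\mathbf{x}$ (else $S^-(A^{(\ell)}\mathbf{x})>r$), and a leading zero of $A^{(\ell)}\mathbf{x}$ would allow $S^+(A^{(\ell)}\mathbf{x})$ to exceed $r$. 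Applying the equality clause of Theorem \ref{B}(2) to $A^{(\ell)}$ gives $\theta=\epsilon_r\epsilon_{r+1}\eta$, $\eta$ being the sign of the first nonzero component of $\mathbf{x}$; the last-component statement follows by conjugating with the coordinate reversal $(x_1,\dots,x_n)\mapsto(x_n,\dots,x_1)$ (together with the reversal of rows and columns of $A$), which preserves $S^\pm$ and every minor. A density-free alternative is to run the Cauchy--Binet computation directly on the $(r+1)\times(r+1)$ submatrix of $A$ formed by the sign-alternation rows of $A\mathbf{x}$ and one column from each sign block of $\mathbf{x}$.

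For (2) $\Rightarrow$ (3): fix a $k\times k$ submatrix $A_k$ on rows $i_1<\dots<i_k$, columns $j_1<\dots<j_k$, and $\boldsymbol{\alpha}$ as in (3). The identity $A_k\adj(A_k)=\det(A_k)I_k$ gives $A_k\mathbf{y}^{A_k}=\det(A_k)\boldsymbol{\alpha}$, so if $\det A_k=0$ then $A_k\mathbf{y}^{A_k}=\mathbf{0}$ and (3) holds for $A_k$ vacuously. If $\det A_k\neq0$, embed $\mathbf{y}^{A_k}$ into $\mathbf{x}\in\mathbb{R}^n$ by putting its entries in coordinates $j_1<\dots<j_k$ and zeros elsewhere; then $S^-(\mathbf{x})=S^-(\mathbf{y}^{A_k})$, and the subvector of $A\mathbf{x}$ indexed by $i_1<\dots<i_k$ is $\det(A_k)\boldsymbol{\alpha}$, which strictly alternates. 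Hence $S^-(A\mathbf{x})\geq k-1$, and (2) squeezes $S^-(A\mathbf{x})=S^-(\mathbf{y}^{A_k})=k-1$, so $\mathbf{y}^{A_k}$ strictly alternates and $(\mathbf{y}^{A_k})_1\neq0$. Since the $k-1$ sign changes of $A\mathbf{x}$ already occur among rows $\geq i_1$, no nonzero entry of $A\mathbf{x}$ in rows $<i_1$ differs in sign from $(A\mathbf{x})_{i_1}=\det(A_k)\alpha_1$, so the first nonzero component of $A\mathbf{x}$ has sign $\sgn(\det A_k)$. The equality clause of (2) (with $r=k-1$) now gives $\sgn(\det A_k)=\epsilon_{k-1}\epsilon_k\cdot\sgn((\mathbf{y}^{A_k})_1)$, which is exactly the first-component assertion of (3) for $A_k$; the last-component assertion is the symmetric argument, and the inequality in (3) reads $k-1\leq k-1$.

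For (3) $\Rightarrow$ (1): induct on $k$ to show every $k\times k$ minor of $A$ vanishes or has sign $\epsilon_k$, i.e.\ $A$ is SR$_k(\epsilon)$. For $k=1$, a $1\times1$ submatrix $[a]$ has $\mathbf{y}^{[a]}=(\alpha_1)$ with $\alpha_1>0$, and if $a\neq0$ the equality clause of (3) (with $r=0$, $\epsilon_0=1$) gives $\sgn(a)=\epsilon_1$. For the step, assume $A$ is SR$_{k-1}(\epsilon)$ and let $A_k$ be any $k\times k$ submatrix with $\det A_k\neq0$. As before $A_k\mathbf{y}^{A_k}=\det(A_k)\boldsymbol{\alpha}$ strictly alternates, so $S^-(A_k\mathbf{y}^{A_k})=k-1$, whence $S^-(\mathbf{y}^{A_k})\geq k-1$ by (3); thus $\mathbf{y}^{A_k}$ strictly alternates and $(\mathbf{y}^{A_k})_1=\sum_{j=1}^{k}M_{j1}\alpha_j\neq0$, where $M_{j1}$ is the $(k-1)\times(k-1)$ minor of $A$ obtained by deleting row $j$ and column $1$ of $A_k$. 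By the inductive hypothesis each $M_{j1}$ vanishes or has sign $\epsilon_{k-1}$, and they are not all zero (expand $\det A_k$ along its first column), so $\sgn((\mathbf{y}^{A_k})_1)=\epsilon_{k-1}$. The equality clause of (3) then gives $\sgn(\det A_k)=\epsilon_{k-1}\epsilon_k\cdot\epsilon_{k-1}=\epsilon_k$. Taking $k=\min\{m,n\}$ yields $A\in\mathrm{SR}(\epsilon)$.

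The main obstacle is the equality clause in (1) $\Rightarrow$ (2): classical variation-diminution supplies only the inequality, and propagating the signature $\epsilon_r,\epsilon_{r+1}$ through the boundary case uniformly over possibly rank-deficient $A$ is precisely the point that Motzkin sidestepped with a rank hypothesis. The density reduction above transfers this to Theorem \ref{B}, at the cost of the (classical) SSR$(\epsilon)$-approximation of SR$(\epsilon)$ matrices and the semicontinuity bookkeeping; the direct Cauchy--Binet computation is the self-contained alternative. By comparison (2) $\Rightarrow$ (3) and (3) $\Rightarrow$ (1) are routine once one has the test vector $\mathbf{y}^{A_k}=\adj(A_k)\boldsymbol{\alpha}$, which is designed precisely so that $A_k\mathbf{y}^{A_k}=\det(A_k)\boldsymbol{\alpha}$ is known in advance, regardless of $\rk(A_k)$.
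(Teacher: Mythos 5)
Your proposal is correct and follows essentially the same route as the paper: density of SSR$(\epsilon)$ in SR$(\epsilon)$ plus the semicontinuity lemma for $(1)\Rightarrow(2)$, embedding $\mathbf{y}^{A_k}$ into $\mathbb{R}^n$ for $(2)\Rightarrow(3)$, and Cramer-style induction via the cofactor expansion of $(\mathbf{y}^{A_k})_1$ for $(3)\Rightarrow(1)$. The only cosmetic differences are that you prove just the minimal cycle $(1)\Rightarrow(2)\Rightarrow(3)\Rightarrow(1)$ (omitting the paper's redundant direct argument for $(2)\Rightarrow(1)$), and your $(2)\Rightarrow(3)$ is stated only for the specific test vector $\adj(A_k)\boldsymbol{\alpha}$ rather than for an arbitrary $\mathbf{y}^{A_k}$ as in the paper.
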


	In a sense, Theorems \ref{A}, \ref{B}, and Theorem \ref{VD_SRE} are the culmination of many previous results, extending work on SSR/SR matrices (Schoenberg \cite{S30}, Motzkin \cite{Mot36} and Gantmacher--Krein \cite{GK50}) and TP/TN matrices (Gantmacher--Krein \cite{GK50}, Brown--Johnstone--MacGibbon \cite{BJM81} and Choudhury \cite{C22,C23}).\medskip

	\section{The Proofs} \label{section VDP} 	
	We begin by proving Theorems \ref{A} and \ref{B} -- for SSR matrices.
	\begin{proof}[Proof of Theorem \ref{A}] 
		Let $A\in\mathbb{R}^{m\times n}$ be such that $S^+(A\mathbf{x})\leq S^-(\mathbf{x})$ for all $\mathbf{0}\neq\mathbf{x}\in\mathbb{R}^n$. Our aim is to show all minors of the same size of $A$ have the same sign. The proof is by induction on the size of the minors. First we show that all $p\times p$ minors of $A$ are non-zero for $1\leq p\leq\mathrm{min}\{m,n\}$.  Assume to the contrary that $\det A\begin{pmatrix}
			i_1,\ldots,i_p \\
			j_1,\ldots,j_p
		\end{pmatrix}=0$, where (henceforth) $1\leq i_1<\cdots<i_p\leq m$ index the rows of the submatrix and $1\leq j_1<\cdots<j_p\leq n$ index the columns. Then there exists $\mathbf{0}\neq\mathbf{z}\in\mathbb{R}^{p}$ such that
		$A\begin{pmatrix}
			i_1,\ldots,i_p \\
			j_1,\ldots,j_p
		\end{pmatrix}\mathbf{z}=\mathbf{0}.$
		Define $\mathbf{x}\in\mathbb{R}^n$ such that the $j_l$ component of $\mathbf{x}$ is $z_{l}$ for $l=1,\ldots,p$, and all other components are zero. Then $S^-(\mathbf{x})\leq p-1$ while $S^+(A\mathbf{x})\geq p$, a contradiction. This shows the claim. 
		
		Next we show that all entries of $A$ are of the same sign. For each $j\leq n$, $S^-(\mathbf{e}^j)=0$, and thus $S^+(A\mathbf{e}^j)=0.$ Therefore, all the elements of $A$ present in the $j^{th}$ column (which we denote by $\mathbf{a}^j$ henceforth) are non-zero and moreover of the same sign. Now, we will show that no two columns of $A$ have different signs (so $n\geq 2$ here). On the contrary, assume without loss of generality, that $\mathbf{a}^{i_1}$ is positive while $\mathbf{a}^{i_2}$ is negative. We can choose positive real numbers $x_{i_1}$ and $x_{i_2}$ such that at least one entry of the vector $\mathbf{v} = x_{i_1}\mathbf{a}^{i_1}+x_{i_2}\mathbf{a}^{i_2}\in\mathbb{R}^{m}$ is zero. Take $\mathbf{0}\neq\mathbf{x}\in\mathbb{R}^n$ with $i_1$ and $i_2$ entries as $x_{i_1}$ and $x_{i_2}$, respectively and all other entries zero. Then $S^-(\mathbf{x})=0$ whereas  $S^+(A\mathbf{x}) = S^+(\mathbf{v}) \geq1$, a contradiction. Thus $A$ is SSR$_1.$ 
		
		Next, we assume that $A$ is SSR$_{p-1}$ for $1<p\leq\mathrm{min}\{m,n\}$, and show that $A$ is SSR$_p$. The proof strategy is broadly as follows: To prove all $p\times p$ minors of $A$ have the same sign, we will choose arbitrary $p$ columns of $A$, say indexed by $1\leq j_1<\cdots<j_p\leq n$ and then show that (a) all $p\times p$ minors of the submatrix of $A$ included in rows $1,\ldots,m$ and columns $j_1,\ldots,j_p$ have the same sign, and (b) this sign is independent of $j_1,\ldots,j_p$ and depends only on $p$.
		
		Fix $1\leq j_1<\cdots<j_p\leq n$. We will first prove that $\det A\begin{pmatrix}
			i_1,\ldots,i_p \\
			j_1,\ldots,j_p
		\end{pmatrix}$ for all $1\leq i_1<\cdots<i_p\leq m$ have the same sign. If $p=m$, then we are done; if $p<m$, it suffices to show this for the $(p+1)$ minors of size $p\times p$ that can be formed from an arbitrary choice of $(p+1)$ rows, $i_1<\cdots<i_{p+1}$ (say). Consider the following $p\times p$ submatrices of $A$ which are obtained by deleting the $l^{th}$ row (indexed by $i_l$) of $A\begin{pmatrix}
			i_1,\ldots,i_{p+1} \\
			j_1,\ldots,j_p
		\end{pmatrix}$ for $l=1,\ldots,p+1$:
	
		\[A_l=\begin{pmatrix}
			a_{i_1j_1} & a_{i_1j_2} & \ldots & a_{i_1j_p} \\
			\vdots    & \vdots    & \ddots & \vdots \\
			a_{i_{l-1}j_1} & a_{i_{l-1}j_2} & \ldots & a_{i_{l-1}j_p} \\
			a_{i_{l+1}j_1} & a_{i_{l+1}j_2} & \ldots & a_{i_{l+1}j_p} \\
			\vdots    & \vdots    & \ddots & \vdots \\
			a_{i_{p+1}j_1} & a_{i_{p+1}j_2} & \ldots & a_{i_{p+1}j_p} \\
		\end{pmatrix}_{p\times p}.\] We will show that $\det A_1=\det A_l$ for $l=2,\ldots,p+1$. Fix $l\neq1$; then the determinant of the $p\times p$ matrix $A_l$ is given by 
		\[\det A_l= a_{i_1j_1}A_l^{11}-a_{i_1j_2}A_l^{12}+\cdots+(-1)^{p-1}a_{i_ 1j_p}A_l^{1p},\]
		where $A_l^{11},\ldots,A_l^{1p}$ are non-zero (from above) $(p-1)\times(p-1)$ minors of $A$ and hence by the induction hypothesis they all have the same sign. Define $\mathbf{0}\neq\mathbf{x}\in\mathbb{R}^n$ such that the entries in positions $j_1,j_2,\ldots,j_p$ are $A_l^{11},-A_l^{12},\ldots,(-1)^{p-1}A_l^{1p}$, respectively and zero elsewhere. Thus $S^-(\mathbf{x})=p-1$. Now, the $i_1,\ldots,i_{p+1}$ entries of the vector $A\mathbf{x}$ are given by 
		\[\det A_l,0,\ldots,0,(-1)^l\det A_1,0,\ldots,0,\]
		where $(-1)^l\det A_1$ is present in the $i_l^{th}$ position. Suppose that $\det A_1$ and $\det A_l$ are of opposite signs. Replacing the $0$ entry by $(-1)^k(\det A_1)$ in positions $i_k$ for $k\in[2,p+1]\setminus\{l\}$, we see that $S^+(A\mathbf{x})\geq p$, which is not possible. Thus $\det A_1=\det A_l$ for all $l=2,\ldots,p+1.$ Therefore, the minors $\det A\begin{pmatrix}
			i_1,\ldots,i_p \\
			j_1,\ldots,j_p
		\end{pmatrix}$ for all $1\leq i_1<\cdots<i_p\leq m$ have the same signs. We denote this common sign of the minors by $\varepsilon^{\prime}(j_1,\ldots,j_p)$. Note that if $p=n$, then we are done. To complete the proof, we assume $p<n$ and show that this sign is independent of the particular choice of $p$ columns $j_1,\ldots,j_p$ and depends only on $p$. Let us take $(p+1)$ columns of $A$, with $1\leq j_1<\cdots<j_{p+1}\leq n$, and let $\varepsilon^{\prime}_r:=\varepsilon^{\prime}(j_1,\ldots,j_{r-1},j_{r+1},\ldots,j_{p+1}).$ It suffices to prove $\varepsilon^{\prime}_r=\varepsilon^{\prime}_{r+1}$ for $r=1,\ldots,p.$
		
		Define a vector $\mathbf{x}\in\mathbb{R}^{n}$ whose coordinates in position $j_r$ are given by
		\[x_{j_r}:=(-1)^{r-1}\det A\begin{pmatrix}
			1,2,\ldots \hspace{1.1cm} \ldots, p-1, p \\
			j_1,\ldots,j_{r-1},j_{r+1},\ldots,j_{p+1}
		\end{pmatrix}\]
		for $r=1,\ldots,p+1$ and zero elsewhere. By appending any row of $A\begin{pmatrix}
			1,2,\ldots \hspace{1.1cm} \ldots, p-1, p \\
			j_1,\ldots,j_{r-1},j_{r+1},\ldots,j_{p+1}
		\end{pmatrix}$ to the top, and taking the determinant of this singular matrix by expanding along the top row, it follows that
		\[\sum_{r=1}^{p+1}a_{ij_r}x_{j_r}=0 \quad\mathrm{for}\hspace{.1cm} i=1,\ldots,p\] 
		and hence $S^+(A\mathbf{x})\geq p$. Now if $\varepsilon^{\prime}_r\varepsilon^{\prime}_{r+1}<0$ for some $r$, then $x_{j_r}x_{j_{r+1}}>0$, and so $S^-(\mathbf{x})\leq p-1$ which is a contradiction. Hence, $A$ is SSR.
		
		To prove the converse, let $A\in\mathbb{R}^{m\times n}$ be SSR.  Take a non-zero vector $\mathbf{x}\in\mathbb{R}^n$ and assume that $0\leq S^-(\mathbf{x})=r\leq n-1$. Therefore, we can  partition $\mathbf{x}$ into $(r+1)$  contiguous components such that no two components having different signs belong to the same partition:
		\begin{align}\label{partition}
			(x_1,\ldots,x_{s_1}), \quad(x_{s_1+1},\ldots,x_{s_2}), \quad\ldots,\quad (x_{s_r+1},\ldots,x_n).
		\end{align}
		We assume without loss of generality that the sign of the non-zero elements in the $k^{th}$ partition is given by $(-1)^{k-1}$. Also, note that there is at least one non-zero element in each partition, otherwise $S^-(\mathbf{x})<r$. Writing $A=(\mathbf{a}^1|\ldots|\mathbf{a}^n)$ where $\mathbf{a}^i\in\mathbb{R}^m$ for all $i=1,\ldots, n$, we obtain 
		\[A\mathbf{x}=\sum_{k=1}^{n}x_k\mathbf{a}^k =\sum_{j=1}^{r+1}(-1)^{j-1}\mathbf{y}^j\] where \[\mathbf{y}^j=\sum_{k=s_{j-1}+1}^{s_j}|x_k|\mathbf{a}^k \hspace{.2cm}\mathrm{for}\hspace{.2cm} j=1,\ldots,r+1,\hspace{.2cm}\hspace{.2cm} s_0=0, \hspace{.2cm}\mathrm{and} \hspace{.2cm} s_{r+1}=n.\] 
		Let $Y:=(\mathbf{y}^1|\ldots|\mathbf{y}^{r+1})\in\mathbb{R}^{m\times (r+1)}$. Using basic properties of determinants, we have for $1\leq p\leq\mathrm{min}\{m, r+1\}:$
		\[\det Y\begin{pmatrix}
			i_1,\ldots,i_p \\
			j_1,\ldots,j_p
		\end{pmatrix}=\sum_{k_1=s_{j_1-1}+1}^{s_{j_1}}\cdots\sum_{k_p=s_{j_p-1}+1}^{s_{j_p}}|x_{k_1}|\cdots|x_{k_p}|\det A\begin{pmatrix}
			i_1,\ldots,i_p \\
			k_1,\ldots,k_p
		\end{pmatrix}. \] Since $A$ is SSR, the terms $\det A\begin{pmatrix}
			i_1,\ldots,i_p \\
			k_1,\ldots,k_p
		\end{pmatrix}$ have the same sign for all choices of $1\leq i_1<\cdots<i_p\leq m$, $1\leq k_1<\cdots<k_p\leq n$; and $x_{k_1}\cdots x_{k_p}\neq0$ for some choice of admissible $\{k_1,\ldots,k_p\}$ in the above sum. Hence $Y$ is SSR. Note that the minors of $A$ and $Y$ have the same sign pattern.
		
		With the above analysis in hand, we will show that $S^+(A\mathbf{x})\leq r$. Let $\mathbf{d}_{r+1}:=(1,-1,\ldots,(-1)^{r})^T\in\mathbb{R}^{r+1}$. Consider the following two cases.
		
		\noindent \textbf{Case I.} $m\leq r+1$.
		
		If $m=r+1$, then since $Y$ is SSR and hence invertible, hence $A\mathbf{x}=Y\mathbf{d}_{r+1}\neq\mathbf{0}$, and $S^+(A\mathbf{x}) = S^+(Y\mathbf{d}_{r+1})\leq r = S^-(\mathbf{x})$. Otherwise, $m\leq r$. Then clearly $S^+(A\mathbf{x})\leq m\leq S^-(\mathbf{x}).$
		
		\noindent \textbf{Case II.} $m>r+1$.
		
		Define $\mathbf{w}:=A\mathbf{x}=Y\mathbf{d}_{r+1}$ and note that $\mathbf{w}\neq\mathbf{0}$, since $Y$ has rank $(r+1)$ and hence independent columns. Now assume to the contrary that $S^+(A\mathbf{x})\geq r+1$. Thus there exist indices $1\leq i_1<\cdots<i_{r+2}\leq m$ and $\theta\in\{1,-1\}$ such that $\theta(-1)^{j-1}w_{i_j}\geq 0$ for $j=1,\ldots,r+2$. Since $Y$ is SSR$_{r+1}$, at most $r$ of the $w_{i_j}$ can be zero; in particular, at least two of them are non-zero. Now, the determinant 
		\[\det\begin{pmatrix}
			w_{i_1} & y_{i_11} & \ldots & y_{i_1r+1} \\
			w_{i_2} & y_{i_21} & \ldots & y_{i_2r+1} \\
			\vdots  & \vdots   & \ddots & \vdots     \\
			w_{i_{r+2}} & y_{i_{r+2}1} & \ldots & y_{i_{r+2}r+1} 
		\end{pmatrix}\]
		vanishes since the first column is an alternating sum of the rest. Expanding this determinant along the first column gives 
		\[0=\sum_{j=1}^{r+2}(-1)^{j-1}w_{i_j}\det Y\begin{pmatrix} 
			i_1,\ldots,i_{j-1},i_{j+1},\ldots,i_{r+2} \\
			1, 2,\ldots \hspace{1cm}\ldots,r, r+1	
		\end{pmatrix}.	\]
		But the right hand side is non-zero, since $Y$ is SSR, $(-1)^{j-1}w_{i_j}$ has the same sign $\theta$ for $j=1,\ldots,r+2$, and not all $w_{i_j}$ are zero. This contradiction implies that $S^+(A\mathbf{x})\leq r=S^-(\mathbf{x})$.
	\end{proof}
	To prove our next main result, we recall a 1968 seminal result by Karlin for strict sign regularity, first proved in 1912 by Fekete \cite{FP12} for total positivity and later refined in 1955 by Schoenberg \cite{S55} to total positivity of order $k$.
	\begin{theorem}[Karlin \cite{K68}]\label{Karlin}
		Suppose $1\leq k\leq m,n$ are integers and a matrix $A\in\mathbb{R}^{m\times n}$. Then $A$ is SSR$_k$ if and only if all contiguous minors of order $r\in\{1,\ldots,k\}$ have the same sign.
	\end{theorem}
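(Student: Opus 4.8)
\emph{Proof plan.} The forward implication needs nothing: if $A$ is SSR$_k$ then every $r\times r$ minor ($1\le r\le k$) carries the common sign $\epsilon_r$, and the contiguous ones are a subfamily of all minors, so they carry it too. All the content is in the converse, which I plan to prove by bootstrapping sign-constancy from contiguous minors to all minors.

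So assume that for each $r\in\{1,\dots,k\}$ all contiguous $r\times r$ minors of $A$ equal a common nonzero sign $\epsilon_r$; in particular, for $r=1$ this says the entries of $A$ are nonzero of one sign, so $\epsilon_1$ — and then each $\epsilon_r$ — is well defined. The goal is to show that \emph{every} $r\times r$ minor equals $\epsilon_r$ for $1\le r\le k$, which is exactly SSR$_k$. I would argue by induction on the order $r$. The case $r=1$ is immediate, since every entry is itself a contiguous minor. For $r\le k$, assume the statement for all smaller orders, so that \emph{all} minors of order $<r$ are already pinned down; it remains to handle an arbitrary $r\times r$ minor $\det A(I;J)$ on rows $I$ and columns $J$. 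This I would do by a secondary induction on a measure of how far the index sets $I$ and $J$ are from being intervals of consecutive integers: the base case is $I,J$ both contiguous, which is the hypothesis, and the inductive step reduces a minor whose column set (say) has a gap to ``less dispersed'' minors. The organizing observation is that every $r$-subset lies, in the coordinatewise partial order, between the contiguous set with the same minimum and the contiguous set with the same maximum, and one step of the reduction pushes $J$ toward these; arranging the index-set combinatorics so that the measure strictly drops and only already-known minors are invoked is the bookkeeping part of the argument (carried out in detail in Karlin \cite{K68}).

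The engine of the reduction is a determinantal identity — Sylvester's identity applied to a contiguous pivot submatrix bordered by a row or column \emph{adjacent} to a gap of $I$ or $J$ (equivalently, a suitable three-term Pl\"ucker relation among minors whose index sets interpolate toward the contiguous ones). Such an identity expresses $\det A(I;J)$ through: $r\times r$ minors closer to contiguous (handled by the inner induction), minors of order $r-1$ (handled by the outer induction), and contiguous minors of orders $r$ and $r-1$ (the hypothesis). Because the bordering index is taken adjacent to the gap, every minor on the right-hand side is of a type already known to be of definite sign, and the sign of $\det A(I;J)$ is then forced to be $\epsilon_r$ by multiplying the relevant known sign inequalities and cancelling a common positive (contiguous, lower-order) minor — a mediant step, in the spirit of the $2\times2$ computation: $a_{11}a_{22}>a_{12}a_{21}$ and $a_{12}a_{23}>a_{13}a_{22}$ imply $a_{11}a_{23}>a_{13}a_{21}$, which already upgrades a consecutive-column pair of positive minors to a non-consecutive one.

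The hard part is precisely this sign-forcing, not merely producing a relation. Any determinantal identity that ignores the linear order of the indices — Laplace expansion, Desnanot--Jacobi, a bare Pl\"ucker relation — writes $\det A(I;J)$ as a \emph{difference} of two products of equal sign, from which its sign cannot be read off; this obstruction is intrinsic, and it is exactly why the hypothesis must concern \emph{contiguous} minors rather than, say, rank conditions or an arbitrary same-order sign-constant family. Contiguity is what breaks the symmetry and lets the reduction be run multiplicatively (as a mediant inequality), which is sign-preserving; with that in hand, what remains is the combinatorial scheduling of gaps, pivot blocks, and bordering indices so that at each step the dispersion strictly decreases and every auxiliary minor produced is already known.
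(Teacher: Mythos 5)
The paper does not prove this statement; it is quoted as Karlin's theorem and used as a black box, so there is no ``paper's own proof'' to compare against. Measured on its own terms, your plan correctly identifies the shape of the Fekete--Schoenberg--Karlin argument: the forward direction is vacuous, and the converse is a double induction (outer on the order $r$ of the minor, inner on how far the index sets $I,J$ are from being intervals), with the reduction step driven by a Desnanot--Jacobi/Sylvester-type identity set up so that a contiguous pivot block is bordered by an index adjacent to a gap. Your $2\times2$ mediant computation $a_{11}a_{22}>a_{12}a_{21}$, $a_{12}a_{23}>a_{13}a_{22}\Rightarrow a_{11}a_{23}>a_{13}a_{21}$ is correct and is indeed the prototype of the sign-forcing step, and you correctly diagnose that contiguity is what turns an additive determinantal identity into a usable multiplicative inequality.

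That said, what you have is a plan rather than a proof. The part you explicitly defer -- which well-founded ``dispersion'' measure to induct on, exactly which bordered Sylvester/Pl\"ucker relation to invoke at a given gap, and verifying that every auxiliary minor produced lies strictly below the current one in both inductions -- is precisely where all the difficulty is, and is where a naive attempt derails (e.g.\ a Laplace expansion or an unanchored Desnanot--Jacobi identity yields a difference of two same-sign products, as you yourself note, which is why a carefully chosen bordering adjacent to a gap is essential). Karlin's own proof also passes through an intermediate stage (first extending over arbitrary rows with contiguous columns, or vice versa, before treating both sets as arbitrary), a structuring choice your single ``dispersion'' parameter glosses over. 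None of this contradicts your outline, but since these are exactly the points where the theorem earns its keep, a fully self-contained write-up would need to carry them out; as a citation-level sketch aligned with Karlin, it is fine.
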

	\begin{proof}[Proof of Theorem \ref{B}] 
		We will show a cyclic chain of implications: $(1) \implies (2) \implies (3) \implies (1)$. In addition, we also show that $(2) \implies (1)$ (although it is not needed) in keeping with previous proofs of the equivalence of SSR and VD. This will also be the strategy in our proof of Theorem \ref{VD_SRE} below, for SR matrices.
		
		We begin by showing (1)$\implies$(2). If $A\in\mathbb{R}^{m\times n}$ is SSR$(\epsilon)$, that $S^+(A\mathbf{x})\leq S^-(\mathbf{x})$ for all $\mathbf{0}\neq\mathbf{x}\in\mathbb{R}^n$ immediately follows from Theorem \ref{A}. It only remains to show the second part of the assertion. Let $S^+(A\mathbf{x})=S^-(\mathbf{x})=r$ with $A\mathbf{x}\neq\mathbf{0}$ and $0\leq r\leq\mathrm{min}\{m,n\}-1$. To proceed, we adopt the notation in the second subcase of the preceding proof's converse part; note that now there exist $r+1$ indices instead of $r+2$. We claim that if $\theta(-1)^{j-1}w_{i_j}\geq0$ for $j=1,\ldots,r+1$, then $\theta=\epsilon_r\epsilon_{r+1}$. To show this, we use the following system of equations
		\begin{equation}\label{thrmBeq1}
	Y\begin{pmatrix}
			i_1,\ldots,i_{r+1} \\
			1,\ldots,r+1
		\end{pmatrix}\mathbf{d}_{r+1}=\begin{pmatrix}
			w_{i_1} \\
			\vdots \\
			w_{i_{r+1}}
		\end{pmatrix},
		\end{equation}
		where $\mathbf{d}_{r+1}:=(1,-1,\ldots,(-1)^{r})^T$. Since $Y\in\mathbb{R}^{m\times(r+1)}$ is SSR, every set of $(r+1)$ rows of $Y$ is linearly independent. Using Cramer's rule to solve the system \eqref{thrmBeq1} for the first component gives
		\[1=\frac{\det\begin{pmatrix}
				w_{i_1} & y_{i_12} & \ldots & y_{i_1r+1} \\
				w_{i_2} & y_{i_22} & \ldots & y_{i_2r+1} \\
				\vdots  & \vdots   & \ddots & \vdots      \\
				w_{i_{r+1}} & y_{i_{r+1}2} & \ldots & y_{i_{r+1}r+1} 
		\end{pmatrix}}{\det Y\begin{pmatrix}
				i_1,\ldots,i_{r+1} \\
				1,\ldots,r+1
		\end{pmatrix}}.\]
		Expanding the numerator along the first column gives
		\[1=\frac{\displaystyle{\sum_{j=1}^{r+1}} (-1)^{j-1}w_{i_j}\det Y\begin{pmatrix}
				i_1,\ldots,i_{j-1},i_{j+1},\ldots,i_{r+1} \\
				2,3,\ldots \hspace{1cm} \ldots,r,r+1
		\end{pmatrix}}{\det Y\begin{pmatrix}
				i_1,\ldots,i_{r+1} \\
				1,\ldots,r+1
		\end{pmatrix}}.\]
		Note that $\theta(-1)^{j-1}w_{i_j}\geq0$ for $i=1,\ldots,r+1$ and all $p\times p$ minors of $Y$ have sign $\epsilon_p$ for $p=1,\ldots,r+1$. Multiplying both sides of the above equation by $\theta$, we have $\theta=\epsilon_r\epsilon_{r+1}$. But $\theta$ is the sign of $w_{i_1}$, i.e. of the first non-zero coordinate in an $S^+$-completion of $A\mathbf{x}$; and by \ref{partition} the first non-zero coordinate in $\mathbf{x}$ is positive. This shows~(2).
		
		We next show that (2)$\implies$(1), by induction on the size of the minors. Let $\mathbf{e}^1, \ldots,\mathbf{e}^n \in\mathbb{R}^n$ denote the standard basis. Then
		$S^+(A\mathbf{e}^j)\leq S^-(\mathbf{e}^j)=0$, so $A\mathbf{e}^j$ can not have zero components -- in fact all components have sign $\epsilon_1$ by (2).
		
		We next claim that all minors of $A$ are non-zero. Indeed, suppose for contradiction that $\det A\begin{pmatrix}
			i_1,\ldots,i_p \\
			j_1,\ldots,j_p
		\end{pmatrix}=0$, where $2\leq p\leq\mathrm{min}\{m,n\}$, $1\leq i_1<\cdots<i_p\leq m$ and $1\leq j_1<\cdots<j_p\leq n$. Then there exists $\mathbf{0}\neq\mathbf{z}\in\mathbb{R}^{p}$ such that
		$A\begin{pmatrix}
			i_1,\ldots,i_p \\
			j_1,\ldots,j_p
		\end{pmatrix}\mathbf{z}=\mathbf{0}.$
		Define $\mathbf{x}\in\mathbb{R}^n$ such that the $j^{th}_l$ component of $\mathbf{x}$ is $z_{l}$ for $l=1,\ldots,p$, and all other components are zero. Then $S^-(\mathbf{x})\leq p-1$ while $S^+(A\mathbf{x})\geq p$, a contradiction. Thus all minors of $A$ are non-zero. Finally, we claim by induction on $1\leq p\leq\mathrm{min}\{m,n\}$ that $A$ is SSR$_p$ with sign pattern $(\epsilon_1,\ldots,\epsilon_p)$, with the base case $p=1$ shown above. For the induction step, suppose $1\leq i_1<\cdots<i_p\leq m$ and $1\leq j_1<\cdots<j_p\leq n$ as above. Since $A\begin{pmatrix}
			i_1,\ldots,i_p \\
			j_1,\ldots,j_p
		\end{pmatrix}$ is non-singular, there exists $\mathbf{z}=(z_1,\ldots,z_p)^T\in\mathbb{R}^p$ such that
		\begin{equation}\label{Az=d}
			A\begin{pmatrix}
				i_1,\ldots,i_p \\
				j_1,\ldots,j_p
			\end{pmatrix}\mathbf{z}=\mathbf{d}_{p}.
		\end{equation}
		Again, extend $\mathbf{z}$ to $\mathbf{x}\in\mathbb{R}^n$ by embedding in positions $j_1,\ldots,j_p$ and padding by zeros elsewhere. Then $p-1\leq S^-(A\mathbf{x})\leq S^+(A\mathbf{x})\leq S^-(\mathbf{x}) = S^-(\mathbf{z}) \leq p-1$. It follows that $S^+(A\mathbf{x})=S^-(\mathbf{z})=p-1$. From this we conclude:
		(i) the coordinates of $\mathbf{z}$ alternate in sign; (ii) all coordinates of $A\mathbf{x}$ in positions $1,\ldots,i_1$ are positive; and (iii) $\epsilon_{p-1}\epsilon_pz_1>0$ by the hypothesis. We now return to equation \eqref{Az=d} and solve for $z_{1}$ using Cramer's rule to obtain 
		\[z_{1}=\frac{\displaystyle{\sum_{l=1}^{p}}\det A\begin{pmatrix}
				i_1,\ldots,i_{l-1},i_{l+1},\ldots,i_p \\
				j_2,j_3, \ldots \hspace{.2cm} \ldots, j_{p-1},j_p
		\end{pmatrix}}{\det A\begin{pmatrix}
				i_1,\ldots,i_p \\
				j_1,\ldots,j_p
		\end{pmatrix}}.\]
		Multiplying both sides by $\epsilon_{p-1}\epsilon_p$, \[0<\epsilon_{p-1}\epsilon_pz_{1}=\epsilon_{p-1}\epsilon_p\frac{\displaystyle{\sum_{l=1}^{p}}\det A\begin{pmatrix}
				i_1,\ldots,i_{l-1},i_{l+1},\ldots,i_p \\
				j_2,j_3, \ldots \hspace{.2cm} \ldots, j_{p-1},j_p
		\end{pmatrix}}{\det A\begin{pmatrix}
				i_1,\ldots,i_p \\
				j_1,\ldots,j_p
		\end{pmatrix}}.\]
		The sign of the numerator is $\epsilon_{p-1}$ by the induction hypothesis, and hence the sign of $\det A\begin{pmatrix}
			i_1,\ldots,i_p \\
			j_1,\ldots,j_p
		\end{pmatrix}$ is $\epsilon_p$. This concludes (2)$\implies$(1).
		
		Now we will show that (2)$\implies$(3). In fact we deduce from~(2) a strengthening of~(3), where $A_k$ is not necessarily required to be a contiguous submatrix of $A$ and $\mathbf{0}\neq\mathbf{x}^{A_k}\in\mathbb{R}^k$ can be arbitrary. Let $A_k=A\begin{pmatrix}
			i_1,\ldots,i_k \\
			j_1,\ldots,j_k
		\end{pmatrix}$, where $1\leq i_1<\cdots<i_k\leq m$ and $1\leq j_1<\cdots<j_k\leq n$. Define $\mathbf{x}\in\mathbb{R}^n$ whose coordinates are $x_l^{A_k}$ at position $j_l$ for $l=1,\ldots,k$ and zero elsewhere. By the hypothesis, we have 
		\[S^+(A_k\mathbf{x}^{A_k})\leq S^+(A\mathbf{x})\leq S^-(\mathbf{x})=S^-(\mathbf{x}^{A_k})\quad \implies \quad S^+(A_k\mathbf{x}^{A_k})\leq S^-(\mathbf{x}^{A_k}).\]
		Now suppose that $S^+(A_k\mathbf{x}^{A_k})=S^-(\mathbf{x}^{A_k})=r$, where $0\leq r\leq k-1$. Then $A_k\mathbf{x}^{A_k}\neq\mathbf{0}$ and \[ S^+(A\mathbf{x})=S^-(\mathbf{x})=r.\] 
		Since $A_k\mathbf{x}^{A_k}\neq\mathbf{0}$ is a sub-string of $A\mathbf{x}$, hence $A\mathbf{x}\neq\mathbf{0}$. Also, as $S^+(A\mathbf{x})=S^+(A_k\mathbf{x}^{A_k})$ therefore by considering any $S^+$-filling of $A\mathbf{x}$, all coordinates of $A\mathbf{x}$ in positions $1,\ldots,i_1$ (respectively $i_{k},\ldots,m$) have the same sign. Note that the first non-zero component of $\mathbf{x}^{A_k}$ is same as that of $\mathbf{x}$. Since $S^+(A\mathbf{x})=S^-(\mathbf{x})$ and $A\mathbf{x}\neq\mathbf{0}$, hence by (2), the sign of the first (last) component of $A_k\mathbf{x}^{A_k}$ agrees with $\epsilon_{r}\epsilon_{r+1}$ times the sign of the first (last) non-zero component of $\mathbf{x}^{A_k}$. 
		
		To complete the proof, we show that (3)$\implies$(1). By Karlin's Theorem \ref{Karlin}, it suffices to show that the sign of $\det A_r$ is $\epsilon_r$ for all $r\times r$ contiguous submatrices $A_r$ of $A$, where $1\leq r\leq \mathrm{min}\{m,n\}$. We prove this by induction on $r$. If $r=1$, then $\adj (A_1)=(1)_{1\times1}$ and
		\[0\leq S^+(A_1x^{A_1})\leq S^-(x^{A_1})=0.\]
		Thus $A_1x^{A_1}\neq0$ and by the hypothesis the sign of $A_1x^{A_1}$ is $\epsilon_1$ times the sign of $x^{A_1}>0$. Hence all the entries of $A$ have sign $\epsilon_1$.
		
		For the induction step fix $1\leq r\leq\mathrm{min}\{m,n\}$, and suppose that all contiguous $p\times p$ minors of $A$ have sign $\epsilon_{p}$ for $1\leq p\leq r-1$. Let $A_r$ be an $r\times r$ contiguous submatrix of $A$. By Theorem \ref{Karlin}, $A_r$ is SSR$_{r-1}$. Therefore, all entries of $\adj (A_r)$ are non-zero and have a checkerboard sign pattern. Now, define a vector $\mathbf{x}^{A_r}:=\adj (A_r)\mathbf{v}$, as in \eqref{x^A_k}. Note that all entries of $\mathbf{x}^{A_r}$ are non-zero with alternating signs. We first note that $A_r$ is non-singular, because if not then \[A_r\mathbf{x}^{A_r}=(\det A_r)\mathbf{v}=\mathbf{0}\in\mathbb{R}^r\implies S^+(A_r\mathbf{x}^{A_r})=r>r-1=S^-(\mathbf{x}^{A_r}),\] 
		a contradiction.
		
		Next, we show that $\det A_r$ has sign $\epsilon_r$. Since $A_r\mathbf{x}^{A_r}=(\det A_r)\mathbf{v}$, we have \[r-1=S^-(\mathbf{x}^{A_r})\geq S^+(A_r\mathbf{x}^{A_r})= S^+((\det A_r)\mathbf{v}).\] 
		Now, note that even if some components of the vector $\mathbf{v}$ are zero, the conditions on the $\alpha_j$ imply that $\mathbf{v}$ can be $S^+$-completed to a vector with all non-zero components and alternating signs. In particular, $S^+((\det A_r)\mathbf{v})=r-1$. Thus, we have
		\[S^+(A_r\mathbf{x}^{A_r})=S^-(\mathbf{x}^{A_r})=r-1.\]
		Since the sign of the first component of $A_r\mathbf{x}^{A_r}$ (or $S^+$-completion of $A_r\mathbf{x}^{A_r}$) is sign($\det A_r$), therefore by (3), sign($\det A_r$) and the first (non-zero) component of $\mathbf{x}^{A_r}$ bear the following relation:
		\[\mathrm{sign}(\det A_r)= \epsilon_{r-1}\epsilon_r\mathrm{sign}(\sum_{j=1}^{r}\alpha_jA_r^{j1}),\]
		where $A_r^{ji}$ denotes the determinant of the $(r-1)\times(r-1)$ submatrix of $A_r$ formed by deleting the $j^{th}$ row and $i^{th}$ column of $A_r$. Observe that the sign of each summand on the right of the above equation is $\epsilon_{r-1}$, since sign$(A_r^{j1})=\epsilon_{r-1}$ for $j=1,\ldots,r$, $\alpha_j\geq0$ for $j=1,\ldots,r$, and not all $\alpha_j$ are zero. Hence sign$(\det A_r)=\epsilon_r$. This completes our proof.
	\end{proof}
    	From part (3) of Theorem \ref{B}, we have seen that given an $m\times n$ matrix $A$ and a sign pattern $\epsilon$, the variation diminishing (VD) property for each contiguous square submatrix of $A$, at a single test vector which is drawn from the alternating bi-orthant suffices to show the strict sign regularity of $A$ with sign pattern $\epsilon$. Our next result shows that Theorem \ref{B} is the ``best possible" in the following sense: Any $n\times n$ singular SSR$_{n-1}$ matrix also satisfies the VD property on every vector in $\mathbb{R}^n$ in the interior of a bi-orthant other than the alternating bi-orthant. Thus a characterization of strict sign regularity with a given sign pattern of $n\times n$ matrices in terms of the VD property can not hold with test vectors in any open non-alternating bi-orthant. The proof is similar to $(1)\implies(2)$ of Theorem \ref{B} and we briefly sketch the arguments in the proof.
    	
    	\begin{theorem} \label{SSR_test_vector_not_biorthant} Fix an integer $n>0$ and a sign pattern $\epsilon=(\epsilon_1,\ldots,\epsilon_{n-1})$. Let the square matrix $A\in\mathbb{R}^{n\times n}$ be SSR$_{n-1}(\epsilon)$. Then $S^+(A\mathbf{x}_0)\leq S^-(\mathbf{x}_0)$, for every vector $\mathbf{x}_0$ with no zero entries and at least two successive coordinates which have the same sign. Further, if $A\mathbf{x}_0\neq\mathbf{0}$ and $S^+(A\mathbf{x}_0)=S^-(\mathbf{x}_0)=r$ for some $0\leq r\leq n-2$, then the sign of the first (last) component of $A\mathbf{x}_0$ (if zero, the unique sign given in determining $S^+(A\mathbf{x}_0)$) equals $\epsilon_r\epsilon_{r+1}$ times the sign of the first (last) component of $\mathbf{x}_0$. 
    	\end{theorem}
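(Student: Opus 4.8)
The plan is to run the argument of $(1)\implies(2)$ in Theorem \ref{B} (equivalently, the converse half of Theorem \ref{A} together with its sign refinement) essentially verbatim, the point being that the hypotheses on $\mathbf{x}_0$ force $S^-(\mathbf{x}_0)$ to be small enough that only minors of $A$ of order at most $n-1$ — exactly the ones controlled by the SSR$_{n-1}(\epsilon)$ assumption — ever appear.

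First I would record that, since $\mathbf{x}_0$ has no zero coordinates, $S^-(\mathbf{x}_0)=S^+(\mathbf{x}_0)=:r_0$, and since two successive coordinates of $\mathbf{x}_0$ share a sign, $r_0\leq n-2$. Then, exactly as in \eqref{partition}, I would split $\mathbf{x}_0$ into $r_0+1$ contiguous constant-sign blocks, normalise so the $k$-th block has sign $(-1)^{k-1}$, set $\mathbf{y}^j:=\sum_{k\text{ in block }j}|(x_0)_k|\,\mathbf{a}^k$ and $Y:=(\mathbf{y}^1\mid\cdots\mid\mathbf{y}^{r_0+1})\in\mathbb{R}^{n\times(r_0+1)}$, so that $A\mathbf{x}_0=Y\mathbf{d}_{r_0+1}$ with $\mathbf{d}_{r_0+1}=(1,-1,\ldots,(-1)^{r_0})^T$. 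By multilinearity of the determinant (as in the proof of Theorem \ref{A}), each minor of $Y$ is a combination with nonnegative coefficients, not all zero, of minors of $A$ of the same order $p\leq r_0+1\leq n-1$; all such minors of $A$ have the common sign $\epsilon_p$, so $Y$ is SSR with sign pattern $(\epsilon_1,\ldots,\epsilon_{r_0+1})$ (in particular its columns are independent, so $A\mathbf{x}_0\neq\mathbf{0}$).

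Since $n>r_0+1$, I would then invoke the ``Case II'' argument from the proof of Theorem \ref{A} word for word: if $S^+(A\mathbf{x}_0)\geq r_0+1$, choosing $r_0+2$ rows and bordering the corresponding submatrix of $Y$ by $A\mathbf{x}_0$ produces a singular $(r_0+2)\times(r_0+1)$ matrix whose first column is an alternating combination of the others, and expanding its determinant along that column contradicts $Y$ being SSR$_{r_0+1}$; hence $S^+(A\mathbf{x}_0)\leq r_0=S^-(\mathbf{x}_0)$. For the refinement, supposing $S^+(A\mathbf{x}_0)=S^-(\mathbf{x}_0)=r\leq n-2$ and fixing an $S^+$-completion of $w:=A\mathbf{x}_0$ whose sign changes occur at rows $i_1<\cdots<i_{r+1}$ with $\theta$ the sign of the $i_1$-st coordinate, I would solve $Y\begin{pmatrix}i_1,\ldots,i_{r+1}\\1,\ldots,r+1\end{pmatrix}\mathbf{d}_{r+1}=(w_{i_1},\ldots,w_{i_{r+1}})^T$ for its first component by Cramer's rule and expand the numerator along the first column, just as in $(1)\implies(2)$ of Theorem \ref{B}: this writes $1$ as $\theta$ times a sum of terms all of sign $\theta\epsilon_r\epsilon_{r+1}$ (the orders $r$ and $r+1$ are both $\leq n-1$, so these minors of $Y$ carry signs $\epsilon_r,\epsilon_{r+1}$), forcing $\theta=\epsilon_r\epsilon_{r+1}$; since block $1$ was normalised positive, this is $\epsilon_r\epsilon_{r+1}$ times the sign of the first coordinate of $\mathbf{x}_0$, and the ``last'' statement follows by the mirror-image argument.

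There is no serious obstacle here — the content is bookkeeping rather than a new idea. The one point requiring care, and the crux of the ``best possible'' claim, is to verify at each step that every minor of $A$ that gets used has order at most $n-1$; this is exactly guaranteed by $r_0\leq n-2$, i.e. by $\mathbf{x}_0$ not lying in the alternating bi-orthant, and it is precisely this bound that fails for an alternating test vector (where $S^-=n-1$ and the top $n\times n$ minor of $A$ genuinely enters), which is why Theorem \ref{B} admits no analogue with test vectors drawn from any open non-alternating bi-orthant.
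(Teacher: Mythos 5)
Your proposal is correct and is essentially the paper's own proof: both observe that the hypotheses on $\mathbf{x}_0$ (no zeros, a repeated successive sign) force $S^-(\mathbf{x}_0)=r\leq n-2$, form the $n\times(r+1)$ matrix $Y$ from the sign-constant blocks of $\mathbf{x}_0$ as after \eqref{partition}, rerun Case~II from the proof of Theorem~\ref{A} to obtain $S^+(A\mathbf{x}_0)\leq S^-(\mathbf{x}_0)$, and reuse the Cramer's-rule expansion from $(1)\implies(2)$ of Theorem~\ref{B} for the sign refinement. Your explicit check that $r+1\leq n-1$ keeps every minor of $Y$ of order at most $n-1$, hence within reach of the SSR$_{n-1}(\epsilon)$ hypothesis, is exactly the (unstated) point the paper's terse cross-references are relying on.
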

    		\begin{proof}
    			
    			Write	$\mathbf{x}_0=(x_1,\ldots,x_n)^T$. Then $S^-(\mathbf{x}_0)=r\leq n-2$ and one can partition $\mathbf{x}_0$ as in \eqref{partition}. Since $A\in\mathbb{R}^{n\times n}$ is SSR$_{n-1}(\epsilon)$, by the argument following \eqref{partition}, the matrix $Y \in \mathbb{R}^{n \times {(r+1)}}$ is SSR with sign pattern $(\epsilon_1,\ldots,\epsilon_{r+1})$ and $A\mathbf{x}_0=Y\mathbf{d}_{r+1}$. We define $\mathbf{w}:=A\mathbf{x}_0=Y\mathbf{d}_{r+1}$. Since $n>r+1$, by repeating the argument of case II in the proof of Theorem \ref{A}, we have
    			\[S^+(A\mathbf{x}_0)\leq S^-(\mathbf{x}_0).\] 
    			
    			To show the second part of the assertion, we may repeat the argument used in the proof of $(1)\implies(2)$ of Theorem \ref{B}.
    				\end{proof}

    	It remains to show that for each positive integer $n$, we can always construct a real $n\times n$ matrix $A$ such that $A$ is SSR$_{n-1}$ while $\det A=0.$ This follows from a result of Gantmacher--Krein (see Theorem \ref{G-K} below). The steps to construct such a matrix are as follows:
	\begin{enumerate}[(i)]
		\item Take any $B\in\mathbb{R}^{(n-1)\times (n-1)}$ such that $B$ is SSR$(\epsilon)$ for a fixed sign pattern $\epsilon$. (Such matrices always exist.) Clearly,  $\rk(B)=n-1$.
		\item Let $B^{\prime}:=B\oplus\{0\}\in\mathbb{R}^{n\times n}$. Therefore, $B^{\prime}$ is an $n\times n$ SR matrix whose rank is $(n-1)$.
		\item Define $A:=F_{\sigma}^{(n)}B^{\prime}F_{\sigma}^{(n)}$ where $F_{\sigma}^{(n)}=(\exp^{-\sigma(i-j)^2})_{i,j=1}^n$ for $\sigma>0$ is TP. By the Cauchy--Binet formula, $A$ is SSR$_{n-1}(\epsilon)$ with rank $(n-1)$ and $\det A=0$.
	\end{enumerate}
	\subsection{Variation Diminution for Sign Regular Matrices}\label{sec_VDSR}

	We conclude by showing Theorem \ref{VD_SRE}, which characterizes all $m\times n$ sign regular matrices with a given sign pattern using the variation diminishing property. The proof requires a density theorem proved by Gantmacher--Krein in 1950 using the total positivity of the Gaussian kernel.
	\begin{theorem}[Gantmacher--Krein \cite{GK50}] \label{G-K}
		Let $m,n\geq k\geq1$ be integers. Given a sign pattern $\epsilon=(\epsilon_1,\ldots,\epsilon_k)$, the set of $m\times n$ SSR$_k(\epsilon)$ matrices is dense in the set of $m\times n$ SR$_k(\epsilon)$ matrices.
	\end{theorem}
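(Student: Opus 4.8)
The plan is to establish this via the classical Gaussian (P\'olya frequency) smoothing, the same device used in step~(iii) of the construction just preceding this subsection. Fix $\epsilon=(\epsilon_1,\ldots,\epsilon_k)$ and let $A\in\mathbb{R}^{m\times n}$ be SR$_k(\epsilon)$. For $\sigma>0$ put
\[
A_\sigma:=F_\sigma^{(m)}\,A\,F_\sigma^{(n)},\qquad F_\sigma^{(N)}:=\bigl(\exp(-\sigma(i-j)^2)\bigr)_{i,j=1}^{N}.
\]
Each $F_\sigma^{(N)}$ is strictly totally positive (so every one of its minors is $>0$, and it is invertible), and $F_\sigma^{(N)}\to I_N$ entrywise as $\sigma\to\infty$, whence $A_\sigma\to A$. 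The goal is to see that $A_\sigma$ is SSR$_k(\epsilon)$, at least once $A$ carries enough non-vanishing minors.

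The heart of the argument is a single application of Cauchy--Binet: for $1\le r\le k$ and any row set $I$ and column set $J$ of size $r$,
\[
\det A_\sigma\begin{pmatrix}I\\ J\end{pmatrix}
=\sum_{|K|=|L|=r}\det F_\sigma^{(m)}\begin{pmatrix}I\\ K\end{pmatrix}\,
\det A\begin{pmatrix}K\\ L\end{pmatrix}\,
\det F_\sigma^{(n)}\begin{pmatrix}L\\ J\end{pmatrix},
\]
the sum being over all $r$-subsets $K\subseteq\{1,\ldots,m\}$, $L\subseteq\{1,\ldots,n\}$. Every $F_\sigma$-minor appearing here is strictly positive, while each order-$r$ minor of $A$ (with $r\le k$) is either $0$ or of sign $\epsilon_r$, since $A$ is SR$_k(\epsilon)$. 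Hence the right-hand side is a strictly positive combination of numbers each of which is $0$ or of sign $\epsilon_r$: it vanishes if all order-$r$ minors of $A$ vanish, and has sign $\epsilon_r$ otherwise. Consequently, if $A$ has a non-zero minor of each order $1,\ldots,k$ -- equivalently $\rk(A)\ge k$, since a non-zero $k\times k$ minor forces non-zero minors of all smaller orders by Laplace expansion -- then $A_\sigma$ is SSR$_k(\epsilon)$ for every $\sigma>0$, and $A_\sigma\to A$. This already proves the theorem for every such $A$.

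The case $\rho:=\rk(A)<k$ is the real difficulty, and I expect it to be the main obstacle. Here $A$ has no non-zero minor of order exceeding $\rho$, the signs $\epsilon_{\rho+1},\ldots,\epsilon_k$ are not constrained by $A$ at all, and the smoothing above is useless, since $\rk(A_\sigma)=\rk(A)=\rho<k$ forces $A_\sigma$ to fail SSR$_k$. My plan is to precede the smoothing by a rank-increasing perturbation that stays \emph{inside} SR$_k(\epsilon)$: show that every rank-$\rho$ matrix in SR$_k(\epsilon)$ is a limit of SR$_k(\epsilon)$ matrices of strictly larger rank, and then induct on the deficiency $k-\rho$, using the paragraph above as the base case; the extreme instance $A=\mathbf{0}$ is transparent, since any SSR$_k(\epsilon)$ matrix (such matrices exist) scaled by $t\to 0^+$ keeps all its minor signs and tends to $\mathbf{0}$. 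The delicate step is to produce a small correction all of whose newly non-vanishing minors acquire \emph{exactly} the prescribed sign $\epsilon_r$. A natural attempt is to add to $A_\sigma$ a small generic SSR$_k(\epsilon)$ matrix $tM$ and to control, via multilinearity of the determinant and Cauchy--Binet, the leading term in $t$ of each minor of $A_\sigma$ that was forced to vanish; whether those leading coefficients can be arranged to carry the correct signs $\epsilon_r$ -- which is what is needed to raise the rank while remaining in SR$_k(\epsilon)$ -- is precisely the crux.
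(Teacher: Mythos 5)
The paper itself does not prove this theorem --- it cites it as a classical result of Gantmacher--Krein, noting only that the proof uses the total positivity of the Gaussian kernel. There is therefore no in-paper proof to compare against, so your proposal must be assessed on its own merits.

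Your treatment of the case $\rk(A)\ge k$ is correct, and it is exactly the standard Gaussian-smoothing plus Cauchy--Binet argument --- the same device the paper deploys in step~(iii) of the construction immediately preceding the theorem statement. However, your own assessment of the rank-deficient case is accurate: there is a genuine gap there, and you have not closed it. The obstruction you identify is real --- conjugation by the invertible matrices $F_\sigma^{(m)}$, $F_\sigma^{(n)}$ preserves rank, so $A_\sigma$ can never be SSR$_k$ when $\rk(A)<k$ --- and your proposed remedy (a rank-increasing perturbation that stays inside SR$_k(\epsilon)$, followed by induction on the deficiency $k-\rho$) is a plausible plan, but it remains a plan. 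The difficulty you flag at the end is precisely where care is needed: when one expands $\det\bigl((A_\sigma+tM)(I\,|\,J)\bigr)$ by multilinearity, the coefficients of intermediate powers of $t$ are ``mixed'' determinants drawing some columns from $A_\sigma$ and some from $M$; these are not minors of either matrix alone and inherit no sign control from the SR$_k(\epsilon)$ or SSR$_k(\epsilon)$ hypotheses. Taking $M$ of rank one kills the terms of degree $\ge 2$, but still leaves a degree-one coefficient whose sign is uncontrolled. Until one exhibits a concrete perturbation under which every newly nonvanishing $r\times r$ minor acquires sign exactly $\epsilon_r$ for $r\le k$, the argument is incomplete, and what you have actually proved is the density statement restricted to SR$_k(\epsilon)$ matrices of rank at least~$k$.
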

	The following basic lemma on sign changes of limits of vectors will also be needed in the proof.
	\begin{lemma}\cite[Lemma 3.2]{pinkus}\label{liminf} 
		If $\displaystyle{\lim_{k\to\infty}}\mathbf{x}_k=\mathbf{x}\neq\mathbf{0}\in\mathbb{R}^n$, then \[\varliminf_{k\to\infty}S^-(\mathbf{x}_k)\geq S^-(\mathbf{x})
		\hspace{.2cm}\mathrm{and}\hspace{.2cm} \varlimsup_{k\to\infty}S^+(\mathbf{x}_k)\leq S^+(\mathbf{x}).\]
	\end{lemma}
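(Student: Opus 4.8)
\textbf{Proof proposal for Lemma \ref{liminf}.}

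The plan is to prove the two inequalities separately, and to observe that the second follows from the first by a standard sign-flip/duality trick once the first is established. For the inequality $\varliminf_{k\to\infty} S^-(\mathbf{x}_k) \geq S^-(\mathbf{x})$: write $\mathbf{x} = (x_1,\ldots,x_n)^T$ and let $r := S^-(\mathbf{x})$. By definition of $S^-$, after discarding the zero entries of $\mathbf{x}$ there are exactly $r$ sign changes, so we can find indices $1 \leq p_0 < p_1 < \cdots < p_r \leq n$ with $x_{p_l} \neq 0$ for all $l$ and with $x_{p_{l-1}} x_{p_l} < 0$ for $l = 1,\ldots,r$ (i.e. these $r+1$ nonzero entries already alternate in sign). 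Since $\mathbf{x}_k \to \mathbf{x}$ coordinatewise, for each of the finitely many indices $p_0,\ldots,p_r$ the corresponding coordinate of $\mathbf{x}_k$ converges to the nonzero number $x_{p_l}$; hence there is $K$ such that for all $k \geq K$ and all $l$, the $p_l$-th coordinate of $\mathbf{x}_k$ has the same sign as $x_{p_l}$. For such $k$ the subsequence of entries of $\mathbf{x}_k$ in positions $p_0 < \cdots < p_r$ exhibits $r$ genuine sign changes among nonzero entries, and therefore $S^-(\mathbf{x}_k) \geq r$. This holds for every $k \geq K$, so $\varliminf_{k\to\infty} S^-(\mathbf{x}_k) \geq r = S^-(\mathbf{x})$.

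For the inequality $\varlimsup_{k\to\infty} S^+(\mathbf{x}_k) \leq S^+(\mathbf{x})$: the clean route is to reduce $S^+$ to $S^-$ via the identity $S^+(\mathbf{z}) = n - 1 - S^-(D\mathbf{z}')$-type relations; more precisely, I would use the well-known duality $S^+(\mathbf{z}) + S^-(E_n \mathbf{z}^{\mathrm{rev}}) $ relating $S^+$ of a vector to $S^-$ of a reversed/alternating-sign-scaled vector, or — to keep the argument elementary and self-contained — argue directly as follows. Suppose, for contradiction, that $\varlimsup_{k} S^+(\mathbf{x}_k) \geq s := S^+(\mathbf{x}) + 1$. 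Then along a subsequence $\mathbf{x}_{k_j}$ we have $S^+(\mathbf{x}_{k_j}) \geq s$, so for each $j$ there are indices $1 \leq q_0^{(j)} < \cdots < q_s^{(j)} \leq n$ and signs realizing $s$ alternations in an $S^+$-completion of $\mathbf{x}_{k_j}$; equivalently, the nonzero entries of $\mathbf{x}_{k_j}$ among these positions, ignoring the zero ones, plus a worst-case assignment to the zeros, alternate $s$ times. Since there are only finitely many choices of index tuples $(q_0,\ldots,q_s)$ and of sign patterns, by pigeonhole we may pass to a further subsequence on which the tuple $(q_0,\ldots,q_s)$ and the pattern of strict signs $\pm$ at those positions is constant. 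Passing to the limit, each coordinate $x_{q_l}$ is either $0$ or has the prescribed sign; filling the zeros among $x_{q_0},\ldots,x_{q_s}$ by that same prescribed alternating pattern gives an $S^+$-completion of $\mathbf{x}$ witnessing $s$ sign changes, so $S^+(\mathbf{x}) \geq s = S^+(\mathbf{x}) + 1$, a contradiction. Hence $\varlimsup_{k} S^+(\mathbf{x}_k) \leq S^+(\mathbf{x})$.

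The main obstacle, and the only place requiring care, is the bookkeeping in the $S^+$ part: one must be careful that a position $q_l$ where $\mathbf{x}_{k_j}$ is nonzero but which converges to $0$ in the limit does not cause trouble — this is exactly why the "fill the limiting zeros by the same sign that was used along the subsequence" step is needed, and why restricting to a subsequence with a \emph{fixed} sign pattern at the chosen positions is essential. (For the $S^-$ part there is no such subtlety, since there we only used positions where the \emph{limit} is nonzero.) Everything else is a direct consequence of coordinatewise convergence and the finiteness of the index set $\{1,\ldots,n\}$, so no deeper input is required.
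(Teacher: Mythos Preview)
Your proof is correct. Note that the paper does not actually supply its own proof of this lemma---it is quoted verbatim from Pinkus's book \cite[Lemma~3.2]{pinkus} and used as a black box---so there is nothing in the paper to compare against. Your argument for the $S^-$ inequality is the standard one (and is essentially the proof in Pinkus): pick witnesses to the $r$ sign changes in the limit, then use coordinatewise convergence to nonzero values to transfer those sign changes to $\mathbf{x}_k$ for large $k$. Your direct argument for the $S^+$ inequality via pigeonhole on index tuples and sign patterns is also correct; the key observation---that along the chosen subsequence each coordinate $(x_{k_j})_{q_l}$ is either zero or has the fixed sign $\sigma_l$, hence the limit $x_{q_l}$ satisfies $\sigma_l x_{q_l}\ge 0$---is exactly what is needed to build an $S^+$-completion of $\mathbf{x}$ with $s$ sign changes.

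One cosmetic remark: the aborted ``duality'' sentence (the incomplete identity ``$S^+(\mathbf{z}) + S^-(E_n \mathbf{z}^{\mathrm{rev}})$'' and the undefined $E_n$, $\mathbf{z}^{\mathrm{rev}}$) should be deleted, since you do not use it and it is not well-formed as written. The self-contained pigeonhole argument that follows stands on its own.
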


With these tools at hand, we now show: 
\begin{proof}[Proof of Theorem \ref{VD_SRE}]
		We first show that (1)$\implies$(2). Since $A\in\mathbb{R}^{m\times n}$ is SR$(\epsilon)$, by Theorem \ref{G-K} there exists a sequence of $m\times n$ SSR$(\epsilon)$ matrices $A_l$ converging entrywise to $A$.
		For $\mathbf{0}\neq\mathbf{x}\in\mathbb{R}^n$, using Theorem \ref{A} and Lemma \ref{liminf}, we have \[S^-(A\mathbf{x})\leq\varliminf_{l\to\infty}S^-(A_l\mathbf{x})\leq \varliminf_{l\to\infty}S^+(A_l\mathbf{x})\leq \varliminf_{l\to\infty}S^-(\mathbf{x}) = S^-(\mathbf{x}).\]
		If $S^-(A\mathbf{x})=S^-(\mathbf{x})=r$ where $0\leq r\leq\mathrm{min}\{m,n\}-1$ and $A\mathbf{x}\neq\mathbf{0}$, then we necessarily have 
		\[r=S^-(A\mathbf{x})\leq S^-(A_l\mathbf{x})\leq S^+(A_l\mathbf{x})\leq S^-(\mathbf{x})=r\]
		for all $l$ sufficiently large, using Theorem \ref{A} and Lemma \ref{liminf}. Therefore for large $l$, $S^-(A_l\mathbf{x})=S^+(A_l\mathbf{x})$, i.e. the sign patterns of $A_l\mathbf{x}$ do not depend on any zero entries. At the same time, both vectors $\mathbf{x}$ and $A_l\mathbf{x}$ admit `partitions' of the type \eqref{partition} with precisely $r$ sign changes. Since $S^+(A_l\mathbf{x})=S^-(\mathbf{x})$ and $A_l\mathbf{x}\neq\mathbf{0}$, by Theorem \ref{B} we have that the non-zero sign pattern (sign pattern after removing all zero entries in $A_l\mathbf{x}$) of $A_l\mathbf{x}$ agrees with $\epsilon_r\epsilon_{r+1}$ times that of $\mathbf{x}$. By a limiting argument, the non-zero sign patterns of $A\mathbf{x}$ and $A_l\mathbf{x}$ agree.
		
		That (2)$\implies$(1) is shown similarly to the proof of Theorem \ref{B} by induction on the size $p\times p$, where $1\leq p\leq\mathrm{min}\{m,n\}$. Again observe that $S^-(A\mathbf{e}^j)\leq S^-(\mathbf{e}^j)=0$ for $1\leq j\leq n$. Since the first non-zero component of $\mathbf{e}^j$ is positive, by the hypothesis all non-zero components of $A\mathbf{e}^j$ have  sign $\epsilon_1$ and hence all non-zero entries of $A$ are of sign $\epsilon_1$.
		
		We now assume that all $(p-1)\times(p-1)$ minors of $A$ have  sign $\epsilon_{p-1}$, where $2\leq p\leq\mathrm{min}\{m,n\}$. Consider the $p\times p$ submatrix of $A$ indexed by rows $1\leq i_1<\cdots<i_p\leq m$ and columns $1\leq j_1<\cdots<j_p\leq n$. If the determinant of this submatrix is zero, then we are done. Therefore, assume that this minor is non-zero and hence there exists $\mathbf{z}=(z_{1},\ldots,z_{p})^T\in\mathbb{R}^p$ satisfying $A\begin{pmatrix}
			i_1,\ldots,i_p\\
			j_1,\ldots,j_p
		\end{pmatrix}\mathbf{z}=\mathbf{d}_p:=(1,-1,\ldots,(-1)^{p-1})^T.$ By repeating the corresponding part of the proof of Theorem \ref{B}, we have $S^-(A\mathbf{x})=S^-(\mathbf{x})=p-1$ and hence $A\mathbf{x}\neq\mathbf{0}$, which further implies $\epsilon_{p-1}\epsilon_pz_1>0$. Now by Cramer's rule,
		\[z_{1}=\frac{\displaystyle{\sum_{l=1}^{p}}\det A\begin{pmatrix}
				i_1,\ldots,i_{l-1},i_{l+1},\ldots,i_p \\
				j_2,j_3, \ldots \hspace{.2cm} \ldots, j_{p-1},j_p
		\end{pmatrix}}{\det A\begin{pmatrix}
				i_1,\ldots,i_p \\
				j_1,\ldots,j_p
		\end{pmatrix}}.\]
		Multiplying both sides by $\epsilon_{p-1}\epsilon_p$, we obtain \[0<\epsilon_{p-1}\epsilon_pz_{1}=\epsilon_{p-1}\epsilon_p\frac{\displaystyle{\sum_{l=1}^{p}}\det A\begin{pmatrix}
				i_1,\ldots,i_{l-1},i_{l+1},\ldots,i_p \\
				j_2,j_3, \ldots \hspace{.2cm} \ldots, j_{p-1},j_p
		\end{pmatrix}}{\det A\begin{pmatrix}
				i_1,\ldots,i_p \\
				j_1,\ldots,j_p
		\end{pmatrix}}.\]
		Thus the sum on the right must be non-zero. Moreover, by the induction hypothesis, it has sign $\epsilon_{p-1}$. Thus the sign of $\det A\begin{pmatrix}
			i_1,\ldots,i_p \\
			j_1,\ldots,j_p
		\end{pmatrix}$ is $\epsilon_p$. This completes the induction step. 
		
		Now we show that (2)$\implies$(3) -- again more generally, for arbitrary vectors $\mathbf{0}\neq\mathbf{y}^{A_k}\in\mathbb{R}^k$. The proof is similar to that of Theorem \ref{B}. Fix $1\leq k\leq\mathrm{min}\{m,n\}$, and let $A_k = A\begin{pmatrix}
		i_1, \cdots, i_k \\
		j_1, \cdots, j_k
		\end{pmatrix}$ be an  arbitrary $k\times k$ submatrix of $A$. Take $\mathbf{x}\in\mathbb{R}^n$ whose coordinates are $y_l^{A_k}$ at position $j_l$ for $l=1,\ldots,k$  and zero elsewhere. Then
		\[S^-(A_k\mathbf{y}^{A_k})\leq S^-(A\mathbf{x}) \leq S^-(\mathbf{x})= S^-(\mathbf{y}^{A_k}).\] 
		Now suppose $S^-(A_k\mathbf{y}^{A_k})=S^-(\mathbf{y}^{A_k})=r$, where $0\leq r\leq k-1$ with $A_k\mathbf{y}^{A_k}\neq\mathbf{0}$. Then \[S^-(A_k\mathbf{y}^{A_k})=S^-(\mathbf{y}^{A_k})=S^-(\mathbf{x})=S^-(A\mathbf{x})=r\hspace{.2cm}\mathrm{and}\hspace{.2cm}A\mathbf{x}\neq\mathbf{0}.\] 
		Assume without loss of generality that the first and the last non-zero components of $A_k\mathbf{y}^{A_k}$ are in positions $s,t\in\{1,\ldots,k\}$, respectively. Since $S^-(A\mathbf{x})=S^-(A_k\mathbf{y}^{A_k})=r$ and $A_k\mathbf{y}^{A_k}$ is a sub-string of $A\mathbf{x}$, all non-zero coordinates of $A\mathbf{x}$ in positions $1, 2,\ldots,i_s$ (respectively $i_t,\ldots,m$) have the same sign, which moreover agrees with $\epsilon_r\epsilon_{r+1}$ times the sign of the first (respectively last) non-zero component of $\mathbf{x}$ -- which is the sign of the first (respectively last) non-zero component of $\mathbf{y}^{A_k}$. 
		
		Finally we show that (3)$\implies$(1). We show by induction on $r$ that the sign of $\det A_r$ is $\epsilon_r$ for all $r\times r$ non-singular submatrices of $A$, where $1\leq r\leq\mathrm{min}\{m,n\}$. For the base case $r=1$, if $A_1=(0)_{1\times1}$, there is nothing to prove. Otherwise we use that $\adj(A_1)=(1)_{1\times1}$, which further implies $y^{A_1}=(\alpha_1)_{1\times1}$, where $\alpha_1>0$. Since $S^-(A_1y^{A_1})=S^-(y^{A_1})=0$ and $A_1y^{A_1}\neq0$, by the hypothesis the sign of $A_1y^{A_1}$ is $\epsilon_1$ times the sign of $y^{A_1}$. Therefore, all non-zero entries of $A$ have sign $\epsilon_1$.
		
		For the induction step, $A_r$ is an $r\times r$ submatrix of $A$, and $A$ is SR$_{r-1}$ with sign pattern $(\epsilon_1,\ldots,\epsilon_{r-1})$. If $\det A_r=0$, we are done; else henceforth assume $\det A_r\neq0$. Let \[\boldsymbol{\alpha}=(\alpha_1,-\alpha_2,\ldots,(-1)^{r-1}\alpha_r)^T\in\mathbb{R}^r\hspace{.2cm}\mathrm{with}\hspace{.2cm}\mathrm{all} \hspace{.2cm}\alpha_j>0\] and define $\mathbf{y}^{A_r}:=\adj(A_r)\boldsymbol{\alpha}.$ Note that no row of $\adj(A_r)$ is zero as $\det A_r\neq0$. Also, since $A_{r}$ is SR$_{r-1}(\epsilon)$  with $\epsilon=(\epsilon_1,\ldots,\epsilon_{r-1})$, $\adj (A_r)$ has entries whose signs are in a checkerboard pattern. Further, as all coordinates of $\boldsymbol{\alpha}$ are non-zero with alternating signs, it follows that $S^-(\mathbf{y}^{A_r})=r-1$. Since $\mathbf{0}\neq A_r\mathbf{y}^{A_r}=(\det A_r)\boldsymbol{\alpha},$ \[S^-(A_r\mathbf{y}^{A_r})=S^-(\mathbf{y}^{A_r})=r-1.\] 
		By the hypothesis, the sign of the first non-zero entry of $A_r\mathbf{y}^{A_r}$ and the first non-zero entry of $\mathbf{y}^{A_r}$ satisfy:
		\[\mathrm{sign}(\alpha_1\det A_r)=\epsilon_{r-1}\epsilon_r\mathrm{sign}(\sum_{j=1}^{r}\alpha_jA_r^{j1}),\] 
		where $A_r^{j1}$ is the $(j,1)$ minor of $A_r$ (as in the proof of Theorem \ref{B}). Thus sign$(\det A_r)=\epsilon_r$. This completes our proof.
	\end{proof}


	\section*{Acknowledgments}
	We thank Apoorva Khare for a detailed reading of an earlier draft and for providing valuable feedback. We are also grateful to the anonymous referee for carefully going through the manuscript and offering several constructive comments that helped improve the exposition. The first author was partially supported by INSPIRE Faculty Fellowship research grant DST/INSPIRE/04/2021/002620
	(DST, Govt.~of India), and IIT Gandhinagar Internal Project: IP/IITGN/MATH/PNC/2223/25.

\end{document}